\theoremstyle{plain}							
\newtheorem{thm}{\textbf{Theorem}}[section]
\newtheorem{prop}[thm]{Proposition}
\newtheorem{cor}[thm]{Corollary}
\newtheorem{lem}[thm]{Lemma}
\theoremstyle{definition}						
\newtheorem{defi}[thm]{Definition}
\newtheorem{rem}[thm]{Remark}
\newtheorem{ex}[thm]{Example}
\newtheorem{fact}[thm]{Fact}
\theoremstyle{remark}							
\newcommand{\mo}{\models}						
\newcommand{\mop}{\models^+}					
\newcommand{\moe}{\models^e}					
\newcommand{\fo}{\Vdash}						
\newcommand{\imp}{\Rightarrow}					
\renewcommand{\iff}{\Leftrightarrow}			
\renewcommand{\and}{\wedge}						
\newcommand{\oo}{\infty}						
\newcommand{\QQ}{\mathbb Q}						
\newcommand{\RR}{\mathbb R}						
\newcommand{\C}{\mc C}							
\newcommand{\LS}{\mathcal L}					
\newcommand{\M}{\mc M}							
\newcommand{\xx}{\times}
\renewcommand{\phi}{\varphi}					
\newcommand{\ms}{\mathscr}						
\newcommand{\mf}{\mathfrak}						
\newcommand{\mc}{\mathcal}						
\newcommand{\ov}{\overline}						
\title{Positive model theory and infinitary logic}
\author{Jean Berthet}
\begin{document}

\begin{abstract}
We study the basic properties of a dual "spectral" topology on positive type spaces of h-inductive theories and its essential connection to infinitary logic. The topology is Hausdorff, has the Baire property, and its compactness characterises positive model completeness; it also has a basis of clopen sets and is described by the formulas of geometric logic. The "geometric types" are closed in the type spaces under all the operations of infinitary logic, and we introduce a positive analogue of existentially universal structures, through which we interpret the full first order logic in positive type spaces. This shows how "positive $\omega$-saturation" is a fundamental connection between positive and infinitary logic, and we suggest a geometric analogue of positive Morleyisation.
\end{abstract}


\maketitle

\section*{Introduction and background}
Positive model theory was introduced in \cite{PMTCAT} and revisited in \cite{BYP} in terms of the study of positively existentially closed models of an h-inductive theory. In this context, which may be construed as a generalisation of classical first order model theory by positive Morleyisation, the spaces of types still play an essential role. Contrary to the classical case, the definable topology on these spaces, though compact, is not Hausdorff, and the fact that basic definably closed sets are not open in general implies that some properties which are topologically linked to the definable topology in the classical setting, like the existence of coheirs, fail to be reproducible in the positive context.
In this note we introduce and study, as a tool for positive model theory, a topology which is in some sense "dual" to the definable topology, and intrinsically linked to the positively existentially closed models of an h-inductive theory; we call it the "spectral" topology.\\
In section \ref{SPECTOP}, we define the spectral topology and study its basic properties : it is Hausdorff and finer than the definable topology, though not compact in general. Its compactness characterises the positive model completeness of the Kaiser hull of the underlying theory and in general, we may only count on an "infinitary compactness". We also show that the spectral topology has the Baire property, and exhibit a particular basis of clopen sets, the constructible subsets, which are analogous to the basic clopen sets of the classical type spaces, and to which the notion of resultant may be extended. 
In section \ref{GEOM}, we introduce a description of spectrally open subsets by geometric formulas, using bounds from the size of the language, and introduce the geometric types, generalising the partial positive types. It turns out that the maximal ones are essential the positive types, whereas in general their power of expression is much greater, as every subset of a positive type space is decribed by such a type. 
In section \ref{EXINF}, we connect these infinitary properties to full first order logic, using a positive generalisation of existentially universal structures. The existential models we obtain for h-inductive theories generalise the $\omega$-saturated positively existentially closed ones of \cite{BYP}, and using Karp's theorem on infinitary equivalence we show that every formula of $L_{\oo\omega}$ has an interpretation as a subset of a positive type space. This may be rephrased via positive Morleyisation as a bound on the complexity of formulas necessary to describe the infinitary behavior of finite tuples in $\omega$-saturated structures in the classical setting. 
In the same spirit, we finish in section \ref{GEOMOR} by a reduction of infinitary logic to geometric logic by an infinitary analogue of positive Morleyisation.

\subsubsection*{Conventions and notations}
Our general reference for model theory is \cite{WH}. We work in a many-sorted first order language $L$ (as for instance in \cite{FOCL}, 2.1), the subset of sort symbols of which we note $S$. Relation symbols have a \emph{sorting} and function symbols have an \emph{arity}, which are both a \emph{finite} string (or "tuples") of sort symbols; in addition function (and constant) symbols have a \emph{sort}, which is a sort symbol; languages are interpreted in the classical way. We fix the presence of two $\emptyset$-ary relation symbols $\top$ (true) and $\bot$ (false), interpreted in the obvious way in each $L$-structure. We work with a set $V$ of variables, each coming with a sort, and each sort having a countable supply of variables; we may take $V=S\times \omega$, with a variable $(s,i)\in V$ having sort $s$.\\
Formulas are considered only in $L_{\infty\omega}$ and are noted $\phi(x)$; striclty speaking this denotes a \emph{couple} $(\phi,x)$, where $\phi$ is a formula which free variables are among the finite tuple $x$ of variables. We use the same convention for terms $t(x)$. We note $V^*$ the set of all finite tuples of variables. Such tuples are noted by single letters $x,y,z,\ldots$; the expression $x\cap y=\emptyset$ is intended to mean that $x$ and $y$ have no common variables, while the notation $|x|=|y|$ means that $x$ and $y$ have the same \emph{sorting}, i.e. the sorts of the variables appearing in order in $x$ and $y$ are the same. A formula is \emph{positive (existential)} if is is finitary and mentions only finite conjunctions, disjunctions and existential quantifications; we note $L^+$ the set of positive formulas.\\
A \emph{sorted map of $L$-structures} $f:A\to B$ is a family $(f_s)_{s\in S}$ of maps $f_s:A_s\to B_s$ for each sort symbol $s\in S$; it is a \emph{($L$-)homomorphism} if for every atomic sentence $\phi(a)$ with parameters in $A$ such that $A\mo \phi(a)$, we have $B\mo \phi(fa)$. A homomorphism $f:A\to B$ is an \emph{immersion} if for every positive sentence $\phi(a)$ with parameters in $A$, if $B\mo \phi(fa)$ then $A\mo \phi(a)$.\\
If $\kappa$ is a 
cardinal, by $L_{\kappa\omega}$ we mean the smallest subclass of formulas of $L_{\oo\omega}$, which contains atomic formulas and is closed under negation, conjuncts or disjuncts of sets of formulas of size $<\kappa$, and finite quantifications. The language $L_{\oo\omega}$ is the union of the classes $L_{\kappa\omega}$. 
We distinguish as in \cite{FOCL} the \emph{geometric formulas} of $L_{\oo\omega}$, which class $L^g_{\oo\omega}$ is the smallest subclass of $L_{\oo\omega}$ containing the atomic formulas and closed under finite conjonctions, existential quantifications over single variables and arbitrary disjunctions, and we note $L^g_{\kappa\omega}=L^g_{\oo\omega}\cap L_{\kappa\omega}$.
Our references for positive model theory are \cite{PMTCAT} and \cite{BYP}, but our exposition follows more closely \cite{BYP}. We review the basic elements in a many-sorted context and provide some special notation and terminology.
\subsubsection*{h-Inductive theories and positive models}
We say with \cite{BYP} that a finitary first order $L$-sentence is \emph{h-inductive}, if it is a finite conjunct of \emph{basic h-inductive sentences}, which have the form $\forall x\ (\phi(x)\imp \psi(x))$, where $\phi(x)$ and $\psi(x)$ are positive formulas.
A first order $L$-theory $T$ is \emph{h-inductive}, if it consists of h-inductive sentences. The models of such a theory $T$ form an \emph{inductive class}, i.e. closed under directed colimits of $L$-homomorphisms. If $\C$ is a class of $L$-structures and $A\in \C$, $A$ is \emph{positively existentially closed in $\C$} if every $L$-homomorphism $f:A\to B$, with $B\in \C$, is an immersion.
\begin{fact}[\cite{BYP}, Theorem 1]\label{INDPEC}
In an inductive class, every structure continues into a positively existentially closed one.
\end{fact}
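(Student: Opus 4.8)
The plan is to reformulate positive existential closedness as a realization property and then produce the desired continuation by a transfinite chain of homomorphisms, taking directed colimits at limit stages so as to stay inside the inductive class $\C$. First I would observe that a structure $M\in\C$ fails to be positively existentially closed in $\C$ precisely when some positive formula is "newly realizable" over it: there exist a positive formula $\phi(x)$, a matching tuple $a$ from $M$, and a homomorphism $g:M\to N$ with $N\in\C$ such that $N\mo\phi(ga)$ while $M\not\mo\phi(a)$. Indeed, since homomorphisms preserve positive formulas upward, a homomorphism $f:M\to N$ fails to be an immersion exactly when it makes true some positive formula over $M$ that was false in $M$. Thus $M$ is positively existentially closed in $\C$ if and only if every positive $\phi(a)$ (with $a$ from $M$) that holds in some continuation $M\to N$ in $\C$ already holds in $M$; call such a $\phi(a)$ \emph{realizable over} $M$.

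Given $A\in\C$, I would then build a chain $(A_\alpha)_{\alpha<\lambda}$ of homomorphisms in $\C$ with $A_0=A$, for a fixed ordinal $\lambda$ of cofinality $>\omega$. At a limit stage I set $A_\alpha=\varinjlim_{\beta<\alpha}A_\beta$, which lies in $\C$ by inductivity. At a successor stage I pass from $A_\alpha$ to $A_{\alpha+1}$ by one "round" of realization: enumerate all pairs $(\phi_i(x),a_i)_{i<\mu}$ consisting of a positive formula and a matching tuple from $A_\alpha$, and run an inner chain $A_\alpha=E_0\to E_1\to\cdots\to E_\mu=A_{\alpha+1}$ (directed colimits at inner limit stages). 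At inner step $i$, if $\phi_i(a_i)$ is realizable over $E_i$ in $\C$, let $E_{i+1}\in\C$ be a continuation witnessing this; otherwise set $E_{i+1}=E_i$. Everything remains in $\C$ because only homomorphisms and chain colimits are used, and the bookkeeping is arranged so that in round $\alpha$ every pair $(\phi,a)$ with $a$ a tuple from $A_\alpha$ is scanned at some inner stage.

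Finally set $A^*=\varinjlim_{\alpha<\lambda}A_\alpha\in\C$; then $A\to A^*$ is the required continuation, and I claim $A^*$ is positively existentially closed in $\C$. Let $\phi(a)$ be a positive formula realizable over $A^*$, with $a$ a finite tuple from $A^*$. Since $\mathrm{cf}(\lambda)>\omega$, the tuple $a$ already lives in some $A_{\alpha_0}$, so the pair $(\phi,a)$ was scanned at some inner stage $E_i$ of round $\alpha_0$. The decisive point is that realizability descends along homomorphisms: from a witnessing continuation $A^*\to D$ and the structure map $E_i\to A^*$ we obtain a continuation $E_i\to A^*\to D$, so $\phi(a)$ was in fact realizable over $E_i$ at scan time; hence it was realized, $E_{i+1}\mo\phi(a)$, and, positive formulas being preserved upward, $A^*\mo\phi(a)$. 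By the reformulation of the first paragraph, $A^*$ is positively existentially closed. The one genuinely delicate point — which this argument is designed to sidestep — is that realizing a single positive formula might a priori destroy the realizability of others, and no amalgamation is available in a general inductive class; the remedy is exactly the observation that realizability can only \emph{decrease} along the chain, so that a formula realizable at the colimit is realizable at every earlier stage and is therefore caught the first time it is scanned.
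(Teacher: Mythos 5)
The paper states this result as a Fact cited from \cite{BYP} (Theorem 1) and gives no proof of its own, so there is nothing internal to compare against; your argument is correct and is essentially the standard chain construction used there: reformulate positive existential closedness as ``every realizable positive formula is already realized,'' iterate one-step realizations transfinitely, and use inductivity to take directed colimits. The only cosmetic remark is that since the tuples involved are finite, an outer chain of length $\omega$ already suffices, so requiring $\mathrm{cf}(\lambda)>\omega$ is harmless but unnecessary; your key observation that realizability can only decrease along homomorphisms is exactly the right device for avoiding any appeal to amalgamation.
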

\noindent If $T$ is h-inductive, we note $\M(T)$ the full subcategory of its models and $\M^+(T)$ the full subcategory of positively existentially closed models of $T$. In order to lighten the terminology and avoid confusion with classical existential completness, we suggest to call the objects of $\M^+(T)$ the \emph{positive models} of $T$. If every model of $T$ is positive (i.e. if $\M(T)=\M^+(T)$) we say that $T$ is \emph{positively model complete}.
\begin{fact}[\cite{BYP}, Lemma 15] 
$T$ is positively model complete if and only if for every positive formula $\phi(x)$, there exists a positive formula $\psi(x)$ such that $T\mo \forall x(\phi(x)\wedge\psi(x)\imp \bot)$ and $T\mo \forall x(\top\imp \phi(x)\vee\psi(x))$.
\end{fact}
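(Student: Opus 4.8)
I would prove the two implications separately, the forward (necessity) direction being the substantial one. For sufficiency, assume that every positive $\phi(x)$ admits a positive $\psi(x)$ as described, and aim to show that every model of $T$ is positive, i.e. that every homomorphism $f\colon A\to B$ between models of $T$ is an immersion. I would first record the elementary fact that homomorphisms preserve positive formulas, by induction on formula structure (the atomic case being the definition of homomorphism, the $\wedge,\vee,\exists$ cases immediate). Given a positive $\phi(x)$, a tuple $a$ in $A$ with $B\mo\phi(fa)$, and the associated $\psi$, I would argue: since $A\mo T$ we have $A\mo\phi(a)\vee\psi(a)$; were $A\mo\psi(a)$, preservation would give $B\mo\psi(fa)$ and hence $B\mo\phi(fa)\wedge\psi(fa)$, contradicting $B\mo\forall x(\phi\wedge\psi\imp\bot)$. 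So $A\mo\phi(a)$, proving $f$ an immersion.

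For necessity, I would fix a positive $\phi(x)$ and set $\Gamma=\{\theta(x)\text{ positive}:T\mo\forall x(\phi(x)\wedge\theta(x)\imp\bot)\}$, noting that $\bot\in\Gamma$ and that $\Gamma$ is closed under finite disjunction. The plan is to find a single $\theta\in\Gamma$ with $T\mo\forall x(\phi(x)\vee\theta(x))$, which then serves as the required $\psi$. Supposing no such $\theta$ exists, closure of $\Gamma$ under disjunction together with ordinary first-order compactness makes $T\cup\{\neg\phi(c)\}\cup\{\neg\theta(c):\theta\in\Gamma\}$ consistent, since each finite fragment is realised in a witness to the failure of $T\mo\forall x(\phi\vee\bigvee_i\theta_i)$. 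I would then take a model $N\mo T$ with a tuple $b$ realising this set; by the hypothesis of positive model completeness, $N$ is positive.

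It then remains to contradict $N\mo\neg\phi(b)$, which is the crux. I would build a model $C\mo T$ together with a homomorphism $h\colon N\to C$ satisfying $C\mo\phi(h(b))$, whereupon immersion of $h$ (valid because $N$ is positive) forces $N\mo\phi(b)$. To produce $C$ and $h$ I would verify that $T\cup\mathrm{diag}^+(N)\cup\{\phi(c_b)\}$ is consistent, where $\mathrm{diag}^+(N)$ is the atomic diagram of $N$ over fresh constants $(c_d)_{d\in N}$ and $c_b$ names $b$; a model of it supplies $h$ via $d\mapsto c_d^{C}$ and satisfies $\phi$ at $h(b)$. Consistency is precisely where the choice of $N$ is used: a finite fragment reduces to one positive quantifier-free $\delta(x,z)$ with $N\mo\delta(b,d)$, so $\exists z\,\delta(x,z)\in\mathrm{tp}^+(b/N)$ and hence $\exists z\,\delta(x,z)\notin\Gamma$, which says exactly that $T\not\mo\forall x(\phi(x)\wedge\exists z\,\delta(x,z)\imp\bot)$, giving a model of the fragment. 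The main obstacle is this amalgamation step: turning the positive-type information carried by $b$ into an honest homomorphism from $N$ into a model of $T$ at which $\phi$ holds on the image of $b$, which is what activates positive existential closure; the remaining steps are routine manipulations with first-order compactness, the atomic diagram, and preservation of positive formulas.
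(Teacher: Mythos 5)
Your proof is correct. Note that the paper does not prove this statement at all --- it is imported as a Fact from \cite{BYP}, Lemma 15 --- so there is no in-paper argument to compare against; what you give is essentially the standard proof, with the easy ``sufficiency'' direction by preservation of positive formulas and the ``necessity'' direction by two compactness arguments. One economy worth pointing out: your set $\Gamma$ is exactly the resultant $Res_T(\phi)$, and your ``crux'' step (building $h\colon N\to C$ with $C\mo T$ and $C\mo\phi(hb)$, hence $N\mo\phi(b)$ by immersion) is a re-derivation of the characterisation of positive models recorded as Fact~\ref{POSCAR}; invoking that fact directly, the positive model $N$ with $N\not\mo\phi(b)$ must already satisfy $\psi(b)$ for some $\psi\in Res_T(\phi)$, contradicting the choice of $b$ and letting you skip the second compactness/diagram argument entirely.
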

\noindent We define as in (\cite{MB}, Definition 9), the \emph{resultant} of a positive formula $\phi(x)$, which is the set $Res_T(\phi(x))$ of all positive formulas $\psi(x)$ such that $T\mo \forall x(\phi(x)\wedge\psi(x)\imp \bot)$. Using Lemma 14 of \cite{BYP}, it is possible to characterise the positive models of $T$ as follows.
\begin{fact}\label{POSCAR}
A model $M$ of $T$ is positive if and only if for every positive formula $\phi(x)$, we have $M_x-\phi(x)^M=\bigcup\{\psi(x)^M: \psi\in Res_T(\phi)\}$.  
\end{fact}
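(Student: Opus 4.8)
The plan is to separate the set equality into its two inclusions, and to notice that one inclusion, together with the whole converse implication of the ``iff'', holds for \emph{every} model of $T$, so that positivity of $M$ is only needed for a single inclusion. For the easy inclusion, if $\psi\in Res_T(\phi)$ and $a\in\psi^M$, then since $M\mo T$ and $T\mo\forall x(\phi(x)\wedge\psi(x)\imp\bot)$ the tuple $a$ cannot lie in $\phi^M$; hence $\bigcup\{\psi^M:\psi\in Res_T(\phi)\}\subseteq M_x-\phi^M$ with no hypothesis on $M$. This already yields the direction ``equality $\imp$ positive'': given a homomorphism $f:M\to N$ with $N\mo T$ and $N\mo\phi(fa)$, suppose toward a contradiction that $M\not\mo\phi(a)$; by the equality there is $\psi\in Res_T(\phi)$ with $M\mo\psi(a)$, so $N\mo\psi(fa)$ because homomorphisms preserve positive formulas, contradicting $\psi\in Res_T(\phi)$ together with $N\mo T$. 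Thus $f$ is an immersion, and as $f$ was arbitrary $M$ is positive.

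The real work is the inclusion $M_x-\phi^M\subseteq\bigcup\{\psi^M:\psi\in Res_T(\phi)\}$ under the assumption that $M$ is positive. Fix $a$ with $M\not\mo\phi(a)$ and set $p=\{\psi\in L^+:M\mo\psi(a)\}$; I want to produce some $\psi\in p$ that also lies in $Res_T(\phi)$. Suppose no such $\psi$ exists: then $\phi\wedge\psi$ is $T$-consistent for every $\psi\in p$, and since $p$ is closed under finite conjunction, any finite subset of $\{\phi\}\cup p$ is jointly $T$-satisfiable, so by compactness there is a model $N\mo T$ with a tuple $b$ realising $\{\phi\}\cup p$. In particular $p\subseteq\{\psi\in L^+:N\mo\psi(b)\}$, i.e.\ the positive type of $a$ in $M$ is contained in that of $b$ in $N$.

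At this point I would invoke positive amalgamation over a common positive type, which is the content I would extract from Lemma~14 of \cite{BYP} together with the inductivity of $\M(T)$: there is a model $N^*\mo T$ and homomorphisms $f:M\to N^*$, $g:N\to N^*$ with $fa=gb$. Concretely this is a compactness argument on the union of the positive diagrams of $M$ and $N$ with the constants naming $a$ and $b$ identified; every finite piece is realised in $N$ itself precisely because $p$ is contained in the positive type of $b$. Since $N\mo\phi(b)$ and $g$ is a homomorphism we get $N^*\mo\phi(fa)$, and since $M$ is positive the homomorphism $f$ is an immersion, forcing $M\mo\phi(a)$ --- contradicting the choice of $a$. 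Hence some $\psi\in p\cap Res_T(\phi)$ exists and $a\in\psi^M$, completing the inclusion.

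I expect the amalgamation step producing $N^*$ with $fa=gb$ to be the main obstacle; everything else is routine manipulation of resultants and of preservation under homomorphisms. The delicate verification there is the simultaneous satisfiability of the two positive diagrams, which I would reduce, via existential quantification of the auxiliary constants, exactly to the inclusion of positive types obtained by compactness; once $N^*$ is available, the defining immersion property of positive models closes the argument immediately.
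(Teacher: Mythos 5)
Your argument is correct and is essentially the proof the paper intends: the paper states this as a Fact without writing out a proof, deferring to Lemma~14 of \cite{BYP}, which is exactly the asymmetric amalgamation over an inclusion of positive types ($tp^+_M(a)\subseteq tp^+_N(b)$ yields a common continuation identifying $a$ and $b$) that you reconstruct by compactness on the two positive diagrams. Both the easy inclusion, the converse direction via preservation of positive formulas, and the contradiction via the immersion property of positive models match the standard derivation.
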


\subsubsection*{h-Universal sentences and companions}
A (basic) h-inductive sentence is \emph{h-universal} if it has the form $\forall x\ (\phi(x)\imp \bot)$, with $\phi(x)$ positive. We note $T_u$ the set of h-universal consequences of $T$.
\begin{fact}[\cite{BYP}, Lemma 5]
An $L$-structure $A$ is a model of $T_u$ if and only if there exists an $L$-homomorphism $f:A\to M$ into a model $M$ of $T$.
\end{fact}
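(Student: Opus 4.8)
The plan is to prove the two implications separately. The forward direction --- that the existence of an $L$-homomorphism into a model of $T$ forces $A\mo T_u$ --- is the soft one and rests on the fact that homomorphisms preserve positive formulas; the converse is the substantive half, which I would establish by a positive-diagram argument combined with the compactness theorem for finitary first order logic.

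For the easy direction, suppose $f:A\to M$ is an $L$-homomorphism with $M\mo T$. I would first record the standard preservation lemma: since homomorphisms preserve atomic sentences by definition, an induction on the construction of positive formulas (over finite conjunctions, disjunctions and existential quantifications) shows that $A\mo\phi(a)$ implies $M\mo\phi(fa)$ for every positive $\phi$. Now let $\forall x(\phi(x)\imp\bot)$ be any member of $T_u$; since $M\mo T$ we have $M\mo\forall x(\phi(x)\imp\bot)$. If $A$ failed this sentence there would be a tuple $a$ with $A\mo\phi(a)$, whence $M\mo\phi(fa)$ by preservation, contradicting the choice of $M$. Hence $A\mo T_u$.

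For the converse, assume $A\mo T_u$. Working in the language $L(A)$ obtained by adjoining a fresh constant $c_a$ for each element $a$ of $A$, I would form the positive diagram $\mathrm{Diag}^+(A)$, i.e.\ the set of all atomic $L(A)$-sentences true in $A$. The key observation is that a map from $A$ into an $L$-structure $N$ sending $a\mapsto c_a^N$ is an $L$-homomorphism precisely when $N\mo\mathrm{Diag}^+(A)$; so it suffices to show that $T\cup\mathrm{Diag}^+(A)$ is satisfiable, a model $M$ thereof then providing, via the interpretation of the constants, the desired homomorphism $A\to M\restriction L$ with $M\restriction L\mo T$. By compactness I need only finite satisfiability. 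A finite fragment involves finitely many atomic sentences $\theta_1(a),\dots,\theta_n(a)$ from the diagram, and setting $\phi(x)=\theta_1(x)\and\cdots\and\theta_n(x)$, a positive (indeed quantifier-free positive) formula, we have $A\mo\phi(a)$. If $T\cup\{\theta_1(a),\dots,\theta_n(a)\}$ were inconsistent, then since the constants $c_a$ do not occur in $T$, generalisation on constants would give $T\mo\forall x(\phi(x)\imp\bot)$; this is an h-universal sentence, hence lies in $T_u$, and would force $A\mo\forall x(\phi(x)\imp\bot)$, contradicting $A\mo\phi(a)$. So every finite fragment is consistent and the conclusion follows.

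The main obstacle is the converse, and within it the crucial move is recognising that a finite inconsistency of $T$ with the positive diagram is exactly the assertion that $T$ entails an h-universal sentence refuting $a$ --- this is where the hypothesis $A\mo T_u$ is spent. I would also take some care with the many-sorted and possibly empty-sort bookkeeping, but phrasing the obstruction through the positive existential statement $\exists x\,\phi(x)$ (equivalently the h-universal sentence $\forall x(\phi(x)\imp\bot)$) keeps the constants out of the entailment and renders the argument insensitive to these subtleties.
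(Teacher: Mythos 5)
Your proof is correct and follows the standard route for this result: the paper states it as a Fact imported from \cite{BYP} (Lemma 5) without reproving it, and the argument there is exactly your positive-diagram-plus-compactness argument, with the easy direction by preservation of positive formulas under homomorphisms. The key step --- converting a finite inconsistency of $T$ with $D^+A$ into an h-universal consequence of $T$ via generalisation on the new constants --- is precisely where the hypothesis $A\mo T_u$ is used, and you have it right.
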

\noindent If $A$ is an $L$-structure, we note $L(A)$ the expansion of $L$ by the elements of $A$ naming themselves, and $D^+A$ the atomic diagram of $A$, i.e. the set of atomic sentences with parameters in $A$ which are true in $A$. A model $B$ of $D^+A$ is essentially the same thing as an $L$-homomorphism $f:A\to B$, preserving the canonical interpretation of $A$ in itself.  By the fact, $L$-homomorphisms from $A$ into a model of $T$ are essentially the models of $T_u\cup D^+A$ in the language $L(A)$.\\
\noindent If $T'$ is another h-inductive theory in the same language, we say that $T$ and $T'$ are \emph{positive companions} if they have the same positive models, i.e. $\M^+(T)=\M^+(T')$. The \emph{Kaiser hull of $T$}, noted $T_k$, is the class of all h-inductives sentences which are satisfied in every positive model of $T$.
\begin{fact}[\cite{BYP}, Lemma 7]
$T_u$ is the smallest positive companion of $T$ and $T_k$ is the largest positive companion of $T$.
\end{fact}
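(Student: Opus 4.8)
The plan is to work entirely at the level of model classes and to establish the single chain $\M^+(T_u)=\M^+(T)=\M^+(T_k)$, which shows that both $T_u$ and $T_k$ are positive companions, together with the extremal inclusions $\M(T_k)\subseteq\M(T')\subseteq\M(T_u)$ for every positive companion $T'$, which exhibits $T_u$ as weakest and $T_k$ as strongest. Throughout I would lean on three elementary facts. First, homomorphisms preserve positive sentences while immersions moreover reflect them, by the very definitions. Second, a composition lemma: if $g:A\to B$ and $h:B\to C$ are homomorphisms and $h\circ g$ is an immersion, then $g$ is already an immersion (given $B\mo\phi(ga)$ with $\phi$ positive, push it forward to $C\mo\phi(hga)$ by the homomorphism $h$, then pull it back to $A\mo\phi(a)$ by the immersion $h\circ g$). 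Third, being positively existentially closed in a class is inherited by any subclass still containing the structure. Since $\M(T)$, $\M(T')$ and $\M(T_k)$ are model classes of h-inductive theories they are inductive, so Fact \ref{INDPEC} lets me continue any of their members into a positive model.

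For $\M^+(T_u)=\M^+(T)$ I argue both inclusions. Given $A\in\M^+(T_u)$, the Fact characterising $T_u$ yields an $L$-homomorphism $f:A\to M$ with $M\mo T$; since $M\mo T_u$ too, $f$ is an immersion, and an immersion into a model of a basic h-inductive sentence $\forall x(\phi\imp\psi)$ forces that sentence in the source (preserve $\phi$ forward, reflect $\psi$ back), so $A\mo T$; as $\M(T)\subseteq\M(T_u)$, inherited closure gives $A\in\M^+(T)$. Conversely, if $A\in\M^+(T)$ and $g:A\to B$ with $B\mo T_u$, I continue $B$ into a model $N\mo T$, so the composite $A\to N$ is an immersion by closure of $A$ in $\M(T)$; the composition lemma then makes $g$ itself an immersion, giving $A\in\M^+(T_u)$.

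The identity $\M^+(T_k)=\M^+(T)$ runs in parallel, the relevant observations being that $T\subseteq T_k$ forces $\M(T_k)\subseteq\M(T)$, while every positive model of $T$ satisfies $T_k$ by the very definition of the Kaiser hull. The inclusion $\M^+(T)\subseteq\M^+(T_k)$ is then immediate from inherited closure. For the reverse inclusion I take $A\in\M^+(T_k)$ and a homomorphism $g:A\to B$ with $B\mo T$; here $B$ need not model $T_k$, so I continue $B$ into a positive model $B^+\in\M^+(T)\subseteq\M(T_k)$ and apply closure of $A$ in $\M(T_k)$ to the composite $A\to B^+$, concluding via the composition lemma that $g$ is an immersion. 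This continuation is the main subtlety: $B$ lies outside the class in which $A$ is known to be closed, so one must route the homomorphism through a positive model, and this is exactly where Fact \ref{INDPEC} is used essentially.

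Finally, for extremality, let $T'$ be any positive companion, so $\M^+(T')=\M^+(T)$. Each sentence of $T'$ is h-inductive and holds in every positive model of $T$ (these being precisely the positive models of $T'$, hence models of $T'$), whence $T'\subseteq T_k$ and $\M(T_k)\subseteq\M(T')$. For the opposite bound I show $T'\mo\sigma$ for each h-universal $\sigma=\forall x(\phi(x)\imp\bot)$ of $T_u$: if some $B\mo T'$ realised $\phi$, then continuing $B$ into a positive model $B^+\in\M^+(T)\subseteq\M(T)$ would, since homomorphisms preserve $\phi$, make $B^+$ realise $\phi$, contradicting $T\mo\sigma$; hence $\M(T')\subseteq\M(T_u)$. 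Combining the two chains displays $T_u$ as the smallest and $T_k$ as the largest positive companion of $T$.
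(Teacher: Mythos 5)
Your argument is correct. Note, however, that the paper itself gives no proof of this statement: it is quoted as a Fact from \cite{BYP} (Lemma 7), so there is nothing internal to compare against. Your reduction to the chain $\M^+(T_u)=\M^+(T)=\M^+(T_k)$ plus the extremal inclusions $\M(T_k)\subseteq\M(T')\subseteq\M(T_u)$ is the standard route taken in that reference, and the three auxiliary observations you isolate (preservation/reflection along homomorphisms and immersions, the composition lemma for immersions, and inheritance of positive existential closedness by subclasses) are exactly the right tools; in particular you correctly flag the one genuinely delicate point, namely rerouting a homomorphism $A\to B$ with $B\mo T$ but $B\not\mo T_k$ through a positive continuation $B^+$ before invoking closedness of $A$ in $\M(T_k)$.
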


\subsubsection*{Spaces of positive types}
\noindent If $x$ is a (possibly infinite) tuple of variables (or of new type constants), a \emph{positive type in $T$ in variables $x$} is a set $p$ of positive formulas $\phi(x)$, such that $T\cup p(x)$ is consistent, and $p(x)$ is maximal with this property. We write $S_x(T)$ the set of all such positive types. The sets of the form $[\phi(x)]=\{p\in S_x(T): \phi\in \phi\}$, for positive formulas $\phi(x)$, are closed under finite unions and intersections, and are the basic closed sets of the \emph{definable topology} on $S_x(T)$, which we will note $\ms D$. If $A$ is an $L$-structure, $x$ is a finite tuple of variables and $a\in M_x$, the \emph{positive type of $a$ in $M$}, noted $tp^+_A(a)$, is the set of all positive formulas $\phi(x)$ such that $A\mo \phi(a)$.

\begin{fact}[\cite{BYP}, Lemma 13]
For every finite tuple of variables $x$, the positive types of $S_x(T)$ are the positive types of corresponding tuples in positive models of $T$, and we have $S_x(T_u)=S_x(T)=S_x(T_k)$.
\end{fact}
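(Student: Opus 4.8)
The statement has two halves --- that the abstract positive types in $S_x(T)$ are exactly the realised types $tp^+_M(a)$ for $M\in\M^+(T)$ and $a\in M_x$, and that the spaces for $T$, $T_u$ and $T_k$ coincide --- and I would treat them in turn.

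First I would verify that for a positive model $M$ and a finite tuple $a\in M_x$ the set $tp^+_M(a)$ is genuinely a positive type, the only real issue being maximality. So suppose $\phi(x)$ is positive with $M\not\models\phi(a)$; I must show $T\cup tp^+_M(a)\cup\{\phi\}$ is inconsistent. If it were consistent there would be $(N,b)\models T$ with $N\models tp^+_M(a)(b)$ and $N\models\phi(b)$, and the plan is to amalgamate $M$ and $N$ over the tuple: I claim $T\cup D^+M\cup D^+N$, with the constants naming $a$ and $b$ identified, is consistent. By compactness it suffices to satisfy a finite fragment, which mentions only finitely many atomic facts, say a conjunction $\delta(a,m)$ from $D^+M$ and $\gamma(b,n)$ from $D^+N$; since $M\models\exists y\,\delta(a,y)$ this positive formula lies in $tp^+_M(a)$ and is therefore realised at $b$ in $N$, so $N$ itself --- interpreting the identified constants as $b$ and the surplus parameters $m$ by the existential witnesses --- models the fragment. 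A model $B$ of the whole theory then provides homomorphisms $M\to B$ and $N\to B$ agreeing on the common image of $a$ and $b$. As $M$ is positively existentially closed and $B\models T$, the homomorphism $M\to B$ is an immersion; transporting $N\models\phi(b)$ to $B$ at the common image and reflecting it through the immersion yields $M\models\phi(a)$, the desired contradiction. This maximality argument, resting on the positive amalgamation and the immersion property of positive models, is the step I expect to be the main obstacle, since it is exactly where the positive setting departs from the classical one.

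For the converse inclusion I would, given $p\in S_x(T)$, first realise it in some model: consistency gives $N\models T$ and $a\in N_x$ with $N\models p(a)$, and as $tp^+_N(a)$ is a set of positive formulas consistent with $T$ and containing $p$, maximality of $p$ forces $p=tp^+_N(a)$. To upgrade $N$ to a positive model I invoke Fact \ref{INDPEC}: $\M(T)$ being inductive, there is a homomorphism $f:N\to M$ into a positively existentially closed $M\in\M^+(T)$. Homomorphisms preserve positive formulas, so $p=tp^+_N(a)\subseteq tp^+_M(fa)$, and since $tp^+_M(fa)$ is itself a positive type by the first half, maximality of $p$ gives $p=tp^+_M(fa)$; thus every $p\in S_x(T)$ is realised in a positive model.

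Finally, for $S_x(T_u)=S_x(T)=S_x(T_k)$ the cleanest route is to note that a set $p(x)$ of positive formulas is consistent with one of the three theories if and only if it is consistent with each of the others, so the maximal such sets --- the positive types --- agree. Since $T_u$ consists of consequences of $T$ and $T\subseteq T_k$, consistency with $T_k$ implies consistency with $T$, which implies consistency with $T_u$; conversely a model of $T_u\cup p$ maps homomorphically into a model of $T$ (by the characterisation of models of $T_u$), carrying a realisation of $p$ to one, and a model of $T\cup p$ continues by Fact \ref{INDPEC} into a positive model lying in $\M^+(T)=\M^+(T_k)$, which satisfies $T_k$ and still realises $p$. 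Equivalently, one may simply combine the first two halves: each of the three spaces is the set of realised positive types of its class of positive models, and these classes coincide because $T_u$ and $T_k$ are positive companions of $T$.
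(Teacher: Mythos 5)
The paper states this as a Fact imported from \cite{BYP} (Lemma 13) and gives no proof of its own, so there is nothing internal to compare against; your reconstruction is correct and is essentially the standard argument from that source. The key step you identify --- amalgamating $M$ and $N$ over the tuple by applying compactness to $T\cup D^+M\cup D^+N$ with the constants for $a$ and $b$ identified, using that $\exists y\,\delta(a,y)\in tp^+_M(a)$ to satisfy finite fragments inside $N$, and then reflecting $\phi$ through the immersion $M\to B$ --- is exactly the right mechanism, and the reduction of $S_x(T_u)=S_x(T)=S_x(T_k)$ to the coincidence of consistency (or to the companion facts) is sound.
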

\begin{fact}[\cite{BYP}, Lemma 16]
The definable topology is compact, though not Hausdorff in general.
\end{fact}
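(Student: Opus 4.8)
The plan is to get compactness from the finite intersection property applied to the basic closed sets $[\phi]$, which by hypothesis form a basis of closed sets (they are closed under finite unions and intersections, with $[\bot]=\emptyset$ and $[\top]=S_x(T)$), so it suffices to show that any family $\{[\phi_i]:i\in I\}$ with the finite intersection property has nonempty intersection. Since $[\phi]\cap[\psi]=[\phi\wedge\psi]$ (a type, being realized, contains $\phi$ and $\psi$ iff it contains $\phi\wedge\psi$), this hypothesis says exactly that each finite conjunction $\bigwedge_{i\in J}\phi_i$ lies in some type, hence is consistent with $T$. First I would read this as the consistency of every finite subset of $T\cup\{\phi_i(c):i\in I\}$, where $c$ is a tuple of new constants for $x$; the classical compactness theorem then yields a model $N\mo T$ and $a=c^N$ with $N\mo\phi_i(a)$ for all $i$. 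Using Fact \ref{INDPEC} I would factor $N$ through a homomorphism $g:N\to M$ with $M$ a positive model of $T$; as positive formulas are preserved by homomorphisms, $M\mo\phi_i(ga)$ for every $i$, so $tp^+_M(ga)$ is a positive type lying in every $[\phi_i]$. That $tp^+_M(ga)$ is genuinely a point of $S_x(T)$, i.e. maximal, is where I would invoke the positive models: by Fact \ref{POSCAR}, any positive $\chi$ false of $ga$ is separated from it by a member of $Res_T(\chi)$ true of $ga$, forcing maximality. This gives the nonempty intersection, hence compactness.

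For the failure of the Hausdorff property I would first record the separation criterion. A basic open set is $U_\phi=S_x(T)\setminus[\phi]=\{p:\phi\notin p\}$, and since $[\phi]\cup[\psi]=[\phi\vee\psi]$ one has $U_\phi\cap U_\psi=S_x(T)\setminus[\phi\vee\psi]$; as these $U_\phi$ form a basis, two types $p\neq q$ are separated by open sets precisely when there are positive $\phi\notin p$ and $\psi\notin q$ with $[\phi\vee\psi]=S_x(T)$, i.e. with $\phi\vee\psi$ in every type. Under positive model completeness this is always achievable: taking $\theta\in p\setminus q$ and the complementing $\psi$ provided by the characterisation of positive model completeness quoted above (so that $T\mo\forall x(\theta\wedge\psi\imp\bot)$ and $T\mo\forall x(\top\imp\theta\vee\psi)$) separates $p$ from $q$. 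So the obstruction to Hausdorffness is exactly the absence of such positive complements, and it already occurs in very simple non–model-complete theories.

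Concretely I would take the language $L=\{P_n:n\in\omega\}$ of countably many unary predicates with the h-inductive theory $T=\{\forall x(P_n(x)\wedge P_m(x)\imp\bot):n\neq m\}\cup\{\exists y\,P_n(y):n\in\omega\}$. Every positive formula in the single variable $x$ is $T$-equivalent to $\top$, to $\bot$, or to a finite disjunction $\bigvee_{k\in A}P_k(x)$, and every maximal positive type must contain some $P_n(x)$, since a type omitting all of them could still consistently absorb one; hence $S_x(T)=\{q_n:n\in\omega\}$ with $q_n\ni P_n(x)$. As $[\bigvee_{k\in A}P_k(x)]=\{q_k:k\in A\}$, the basic closed sets are exactly the finite subsets together with the whole space, so the definable topology on $S_x(T)$ is the cofinite topology on a countable set: compact and $T_1$, but not Hausdorff, since $q_n$ and $q_m$ admit no disjoint neighbourhoods (separating them would require two complementary finite disjunctions covering $\omega$). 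I expect the real work to sit in this last part rather than in compactness: one must classify the positive formulas and verify that the only maximal positive types are the $q_n$. The conceptual point to keep in mind is that, in the absence of negation, positive types are maximal, so the space is always $T_1$ and its non-Hausdorffness is of "cofinite" type rather than arising from a nontrivial specialisation order.
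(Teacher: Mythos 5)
Your argument is correct. The paper itself gives no proof here --- this is quoted as Lemma 16 of \cite{BYP} --- and your compactness argument is exactly the standard one: reduce to the finite intersection property for the basic closed sets, apply first-order compactness to $T\cup\{\phi_i(c):i\in I\}$, continue the resulting model into a positive model via Fact \ref{INDPEC}, and use preservation of positive formulas plus Fact \ref{POSCAR} to see that the type obtained is maximal. The one step you flagged as needing real work in your example does go through: in any positive model of your theory every element must satisfy some $P_n$ (otherwise one could enlarge $P_0$ to contain it, giving a non-immersive homomorphism), so $S_x(T)$ is indeed $\{q_n:n\in\omega\}$ with the cofinite topology, which is compact and $T_1$ but not Hausdorff.
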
 

\section{The spectral topology}\label{SPECTOP}
From now on, $T$ is an h-inductive theory in a first order language $L$, $x$ a finite tuple of variables, $S_x(T)$ is the set of positive types in the tuple $x$. If $\phi(x)\in L^+$, we recall that $[\phi(x)]=\{p\in S_x(T): \phi\in p\}$.

\begin{defi}
The \emph{spectral topology on $S_x(T)$} is the topology $\ms S$ which basis of \emph{open} sets are the sets $[\phi]$, for $\phi(x)\in L^+$.
\end{defi}

\begin{prop}\label{TOPSPEC}
The spectral topology is Hausdorff and finer than the definable topology.
\end{prop}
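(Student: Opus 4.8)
The plan is to reduce both assertions to a single structural fact: each basic open set $[\phi]$ is in fact \emph{clopen} in $\ms S$. First I would establish the resultant identity
\[
S_x(T)\setminus[\phi]=\bigcup_{\psi\in Res_T(\phi)}[\psi],
\]
which exhibits the complement of every basic open set as a union of basic open sets, and hence as an $\ms S$-open set, so that each $[\phi]$ is simultaneously open and closed. To prove the identity I would use the fact (\cite{BYP}, Lemma 13) that every $p\in S_x(T)$ is realised as $tp^+_M(a)$ for some positive model $M\mo T$, together with the characterisation of positive models in Fact~\ref{POSCAR}. Writing $p=tp^+_M(a)$, the condition $p\in S_x(T)\setminus[\phi]$ says $a\notin\phi(x)^M$; since $M$ is positive, Fact~\ref{POSCAR} gives $a\in\psi(x)^M$ for some $\psi\in Res_T(\phi)$, i.e.\ $\psi\in p$ and $p\in[\psi]$. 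Conversely, if $\psi\in p$ with $\psi\in Res_T(\phi)$, then $T\mo\forall x(\phi\and\psi\imp\bot)$ together with $M\mo\psi(a)$ forces $a\notin\phi(x)^M$, so $\phi\notin p$. This gives both inclusions.

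With the clopen identity in hand the two conclusions follow almost formally. For \emph{finer than $\ms D$}: the basic \emph{closed} sets of $\ms D$ are exactly the $[\phi]$, so every $\ms D$-open set is a union of complements $S_x(T)\setminus[\phi]$; each such complement is now $\ms S$-open, and since $\ms S$-open sets are closed under arbitrary unions, every $\ms D$-open set is $\ms S$-open, whence $\ms S\supseteq\ms D$. For \emph{Hausdorffness}: if $p\neq q$, the maximality of the two positive types yields a positive formula $\phi\in p\setminus q$ (neither can contain the other without being equal), so $p\in[\phi]$ and $q\in S_x(T)\setminus[\phi]$; both sets are $\ms S$-open and disjoint, so they separate $p$ and $q$. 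In fact $[\phi]$ and its complement give a clopen separation, which is why the argument will also underlie the later claim of a clopen basis.

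The hard part is concentrated entirely in the resultant identity, where the positive-model-theoretic content lives; the rest is pure point-set bookkeeping. The delicate point is that one cannot compute $S_x(T)\setminus[\phi]$ directly from an arbitrary realisation of $p$: Fact~\ref{POSCAR} applies only to \emph{positive} models, so the inclusion $S_x(T)\setminus[\phi]\subseteq\bigcup_{\psi\in Res_T(\phi)}[\psi]$ genuinely requires passing to a positive model realising $p$, and would fail for models of $T$ that are merely models of $T_u$. Once the passage to positive models is made and Fact~\ref{POSCAR} is invoked, no topological obstacle remains.
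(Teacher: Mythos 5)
Your proposal is correct and follows essentially the same route as the paper: both hinge on the resultant identity $S_x(T)\setminus[\phi]=\bigcup_{\psi\in Res_T(\phi)}[\psi]$, proved by realising a type in a positive model, and then deduce that each $[\phi]$ is clopen, from which both the comparison with $\ms D$ and Hausdorff separation follow. Your explicit remark that the argument genuinely requires passing to a \emph{positive} model (not merely a model of $T_u$) is a correct and worthwhile observation that the paper leaves implicit.
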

\begin{proof}
Suppose that $p\in S_x(T)$ and $\phi(x)\in L^+$. If $p\notin [\phi]$, let $M\mop T$ and $a\in M_x$ a realisation of $p$ in $M$ : as $M$ is positive, there exists $\psi\in Res_T(\phi)$ such that $M\mo \psi(a)$, and as $p$ is maximal, we have $\psi\in p$, i.e. $p\in [\psi]$. Reciprocally, if $\psi\in Res_T(\phi)$ and $p\in [\psi]$, by consistency of $p$ we have $\phi\notin p$, i.e. $p\notin [\phi]$. This means we have $S_x(T)-[\phi]=\bigcup\{[\psi]: \psi\in Res_T(\phi)\}$ and this last is open for the spectral topology, so $[\phi]$ itself is spectrally closed. The basic definably closed sets $[\phi]$ are closed for $\ms S$, hence $\ms S$ is finer than $\ms D$. As for Hausdorff separation, if $q\neq p$ is another positive type, by maximality of such there exists a formula $\phi$ such that $\phi\in p-q$; this means that $p\in [\phi]$, whereas $q\in[\phi]^c$ and we have just seen that this last set is spectrally open, so the disjoint opens $[\phi]$ and $[\phi]^c$ separate $p$ and $q$.
\end{proof}

\begin{ex}
Let $T$ be the h-inductive theory of (strict) linear orders in the language $\{<\}$ : its positive models are the dense linear orders, which is a consequence of quantifier elimination for $T_k$, or may be checked directly. The rational order $(\QQ,<)$ is a positive model, and the assignation to every element of $\ov\RR=\RR\cup\{-\infty,+\infty\}$ of its (positive) type over $\QQ$ is a bijection between $\ov\RR$ 
and $S_1(\QQ)$. The spectral topology on $S_1(\QQ)$ induces a topology on $\RR$, which happens to be the classical definable one by the following proposition..
\end{ex}

\begin{prop}\label{SPEC}
$T_k$ is positively model complete if and only if for every finite tuple $x\in V^*$ the space $S_x(T)$ is compact for the spectral topology, and in this case the two topologies coincide with the classical definable topology (and $\M^+(T)=\M(T_k)$).
\end{prop}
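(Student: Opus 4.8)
The plan is to reduce everything to the identity established inside the proof of Proposition \ref{TOPSPEC}, namely $S_x(T)-[\phi]=\bigcup\{[\psi]:\psi\in Res_T(\phi)\}$, together with the characterisation of positive model completeness recalled above (\cite{BYP}, Lemma~15) applied to $T_k$. The first thing I would record is a dictionary between resultants and the type space: for positive $\phi(x),\psi(x)$ one has
$\psi\in Res_T(\phi)\iff[\phi]\cap[\psi]=\emptyset\iff T_k\mo\forall x(\phi\and\psi\imp\bot)$.
Both equivalences rest on a single point: an h-universal sentence $\forall x(\chi\imp\bot)$ passes backwards along homomorphisms, so by Fact \ref{INDPEC} it holds in every model of $T$ (resp.\ $T_k$) iff it holds in every positive one, and the positive models are exactly the structures realising the types of $S_x(T)=S_x(T_k)$. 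This lets me translate the two clauses of the Lemma~15 criterion for $T_k$ into purely topological statements about the basic sets $[\phi]$.

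For the implication "$T_k$ positively model complete $\imp$ spectral compactness", I would apply that criterion to $T_k$: for each positive $\phi(x)$ it yields a positive $\psi(x)$ with $T_k\mo\forall x(\phi\and\psi\imp\bot)$ and $T_k\mo\forall x(\top\imp\phi\vee\psi)$. By the dictionary the first clause gives $[\phi]\cap[\psi]=\emptyset$ and the second gives $[\phi]\cup[\psi]=S_x(T)$, so $[\phi]^c=[\psi]$. Hence every basic spectral-open $[\phi]$ has a basic-open complement; in particular $[\phi]=[\psi]^c$ is $\ms D$-open, being the complement of the basic $\ms D$-closed set $[\psi]$, so $\ms S\subseteq\ms D$. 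Combined with the reverse inclusion of Proposition \ref{TOPSPEC} this gives $\ms S=\ms D$, which is then compact (and Hausdorff, with a clopen basis) by the recalled compactness of $\ms D$ — this is the classical definable topology.

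For the converse I assume every $S_x(T)$ spectrally compact. Fixing a positive $\phi(x)$, I cover $S_x(T)=[\phi]\cup\bigcup\{[\psi]:\psi\in Res_T(\phi)\}$ by basic spectral-opens, extract a finite subcover, and set $\psi:=\psi_1\vee\cdots\vee\psi_n$ (the disjunction of the finitely many chosen resultants, read as $\bot$ if none occur), a single positive formula with $[\psi]=\bigcup_i[\psi_i]$. Since each $[\phi]\cap[\psi_i]=\emptyset$ we get $[\phi]\cap[\psi]=\emptyset$, while the subcover gives $[\phi]\cup[\psi]=S_x(T)$; by the dictionary these are precisely the two clauses of the criterion, so $T_k$ is positively model complete. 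Running this for every finite tuple $x$ gives the criterion in full. Finally $\M^+(T)=\M^+(T_k)=\M(T_k)$, the first equality because $T$ and $T_k$ are positive companions and the second by the very definition of positive model completeness.

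I expect the main obstacle to be bookkeeping in the dictionary step rather than any single hard argument: one must be careful that $Res_T(\phi)$, computed from $T$, governs emptiness of intersections in $S_x(T)=S_x(T_k)$, and that it is the criterion \emph{for $T_k$} (not $T$) that one matches — this is exactly where the h-universal pass-back through Fact \ref{INDPEC} and the coincidence of the three type spaces are needed. Once the dictionary is in place, the conceptual heart, turning a finite subcover into the single complement-formula $\psi=\bigvee_i\psi_i$, is short.
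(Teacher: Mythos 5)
Your proof is correct and follows essentially the same route as the paper's: the converse direction is exactly the paper's argument (extract a finite subcover of $S_x(T)-[\phi]=\bigcup\{[\psi]:\psi\in Res_T(\phi)\}$ and combine it into a single positive complement $\psi$), and your forward direction, which applies the Lemma~15 criterion to each $[\phi]$ to conclude $\ms S=\ms D$, is just a slightly more explicit version of the paper's observation that positive model completeness of $T_k$ makes $S_x(T)$ homeomorphic to a classical type space. The ``dictionary'' you set up is sound and only makes precise the translation the paper leaves implicit.
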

\begin{proof}
If $T_k$ is positively model complete, every $L$-formula is equivalent modulo $T_k$ to a positive formula, hence the positive types spaces $S_x(T)=S_x(T_k)$ are homeomorphic to the classic type spaces (for the spectral topology), hence the positive type spaces are compact Hausdorff.\\
Reciprocally, suppose that every space of (positive) types is spectrally compact and let $\phi(x)\in L^+$. In $S_x(T)$ the complement $S_x(T)-[\phi]=\bigcup_{\psi\in Res_T(\phi)} [\psi]$ is spectrally closed, hence compact, so we may find a finite subset of $Res_T(\phi)$, in fact a single $\psi\in Res_T(\phi)$, such that $S_x(T)-[\phi]=[\psi]$. Now if $M\mop T$ and $a\in M_x$ is such that $M\not\mo \phi(a)$, we have $tp(a)=p\notin [\phi]$, hence $p\in [\psi]$, which means that $M\mo \psi(a)$. In other words, $\psi(x)$ defines a complement of $\phi$ modulo $T_k$, which is then positively model complete (and axiomatises the positive models of $T$).
\end{proof}

\begin{ex}
(i) There are many examples where $T_k$ is not positively model complete; for instance, if $T_k$ is not Hausdorff (see \cite{PMTCAT} and \cite{BYP}), then the definable topology cannot coincide with the spectral topology, which is then not compact.\\
(ii) A great counterexample to the proposition is given by the h-inductive theory $T$ of division rings in the language $(+,\xx,-,0,1)$ of rings, as mentionned in \cite{FAD}, Chapter 14. Indeed, one checks that the division rings are axiomatisable by h-inductive sentences, and that the positive division rings are exactly the existentially closed (in the classical sense) division rings, because every existential formula is equivalent modulo $T$ to a positive formula, replacing inequations $P(x)\neq 0$ by $(\exists y)\ P(x).y=1$. It is well known that these do not form an axiomatisable class, hence $T_k$ is not positively model complete.
\end{ex}

\begin{defi}
If $\kappa$ is a cardinal, say that a topological space $X$ is \emph{$\kappa$-compact} if for every open cover $X=\bigcup_{i\in I} O_i$, there exists a subset $J\subset I$, such that $|J|<\kappa$ and $X=\bigcup_{i\in J} O_i$.
\end{defi}

\begin{prop}\label{KCOMP}
If $\kappa=|L|$, for every finite tuple $x$ of variables, the spectral topology on $S_x(T)$ is $\kappa^+$-compact.
\end{prop}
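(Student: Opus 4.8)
The plan is to reduce $\kappa^+$-compactness to a bound on the cardinality of the canonical basis of $\ms S$. By definition the sets $[\phi]$ for $\phi(x)\in L^+$ form a basis of the spectral topology, so the whole statement will follow once I show that this basis has cardinality at most $\kappa=|L|$, together with the standard fact that a space with a basis of size $\leq\kappa$ is $\kappa^+$-compact.

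First I would bound the number of positive formulas whose free variables are among the fixed finite tuple $x$. A positive formula is a finite string over an alphabet consisting of the symbols of $L$, the connectives $\wedge,\vee$, the quantifier $\exists$, punctuation, and the variables; since each sort carries only a countable supply of variables and $|S|\leq|L|$, the set $V$ of variables has cardinality at most $|L|\cdot\aleph_0=\kappa$. Hence the alphabet has cardinality $\leq\kappa$, and, $\kappa$ being infinite (as $|L|\geq\aleph_0$ by the usual convention), the number of finite strings over it is $\kappa^{<\omega}=\kappa$. In particular $|L^+|\leq\kappa$, so $\{[\phi]:\phi(x)\in L^+\}$ is a basis of $\ms S$ of cardinality $\leq\kappa$.

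The second step is the purely topological argument. Given any open cover $S_x(T)=\bigcup_{i\in I}O_i$, each $O_i$ is a union of basic open sets; let $\B$ be the collection of those basic open sets $[\phi]$ that are contained in some $O_i$. Since the $[\phi]$ form a basis and the $O_i$ cover $S_x(T)$, every point of $S_x(T)$ lies in a member of $\B$, so $\B$ is itself a cover, with $|\B|\leq|L^+|\leq\kappa$. For each $[\phi]\in\B$ choose an index $i_\phi\in I$ with $[\phi]\subseteq O_{i_\phi}$, and set $J=\{i_\phi:[\phi]\in\B\}$. Then $|J|\leq\kappa<\kappa^+$ and $\bigcup_{i\in J}O_i\supseteq\bigcup_{[\phi]\in\B}[\phi]=S_x(T)$, which is exactly the required subcover.

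The argument is essentially routine; the only point demanding care is the cardinality count, namely checking that even with a countable supply of variables per sort the total number of positive formulas does not exceed $|L|$. This is precisely where the hypothesis $\kappa=|L|$ is used, and it explains why the degree of compactness is exactly $\kappa^+$ and cannot in general be improved to a smaller cardinal.
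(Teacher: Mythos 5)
Your proof is correct and follows essentially the same route as the paper's: both arguments refine the given cover to a cover by basic open sets $[\phi]$, invoke the bound $|L^+|\leq\kappa^{<\omega}=\kappa$ on positive formulas in the variables $x$, and then select one index $i$ per surviving basic set to extract a subcover of size at most $\kappa$. Your version merely makes the cardinality count of formulas more explicit and packages the topological step as the standard ``basis of size $\leq\kappa$ implies $\kappa^+$-compact'' fact, which is exactly what the paper's argument does in expanded form.
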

\begin{proof}
Suppose that $S_x(T)=\bigcup_{i\in I} O_i$, with $O_i$ a spectral open for each $i$. By definition of the spectral topology, for every $i\in I$ there exists a family $(\phi^i_j(x))_{j\in J_i}$ of positive formulas such that $O_i=\bigcup_{j\in J_i} [\phi^i_j(x)]$. This means we have $S_x(T)=\bigcup \{[\phi^i_j(x)] : (i,j)\in \bigcup_{i\in I} \{i\}\times J_i\}$. Now the set of positive $L$-formulas with free variables among $x$ has cardinality $\kappa^{<\omega}=\kappa$, hence we may choose a subset $K\subset\bigcup_{i\in I} (\{i\}\times J_i)$ such that $|K|\leq \kappa$ and $S_x(T)=\bigcup_{(i,j)\in K} [\phi^i_j]$. Let $I'$ be the set of all $i\in I$ such that $(i,j)\in K$ : we have $|I'|\leq |K|$. For every $(i,j)\in K$, we have $[\phi_j^i]\subset O_i$, whence $S_x(T)=\bigcup_{(i,j)\in K} [\phi^i_j]=\bigcup_{i\in I'} O_i$, and the proof is complete. 
\end{proof}

Remember that a topological space $E$ has the \emph{Baire property}, if for every countable family $(O_i)_{i<\omega}$ of dense subsets of $E$, the intersection $\bigcap_{i<\omega} O_i$ is still a dense subset of $E$.
\begin{prop}
The positive type spaces $S_x(T)$ have the Baire property for the spectral topology.
\end{prop}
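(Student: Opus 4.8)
The plan is to run the classical nested-neighbourhood argument underlying the Baire category theorem, but to compensate for the lack of spectral compactness by extracting the decisive nonempty intersection from the \emph{coarser} definable topology, which is compact. The essential ingredient is the observation already contained in the proof of Proposition \ref{TOPSPEC}: each basic set $[\phi]$ with $\phi(x)\in L^+$ is \emph{clopen} for $\ms S$, being open by definition and closed because its complement $\bigcup\{[\psi]:\psi\in Res_T(\phi)\}$ is spectrally open. Hence $\ms S$ has a basis of clopen sets, and in particular $S_x(T)$ is regular for $\ms S$: given a point in a spectral open $U$, any basic $[\phi]\subseteq U$ containing it satisfies $\overline{[\phi]}=[\phi]\subseteq U$, so neighbourhoods can be shrunk without enlarging their closures.

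Let $(O_n)_{n<\omega}$ be a countable family of dense spectral-open subsets and let $U$ be an arbitrary nonempty spectral open; I must show $U\cap\bigcap_{n<\omega} O_n\neq\emptyset$. Using density together with the clopen basis, I would construct inductively a decreasing sequence of \emph{nonempty} basic clopen sets $[\phi_0]\supseteq[\phi_1]\supseteq\cdots$ with $[\phi_0]\subseteq U\cap O_0$ and $[\phi_{n+1}]\subseteq[\phi_n]\cap O_{n+1}$ for all $n$. This is possible since at each stage $[\phi_n]\cap O_{n+1}$ is a nonempty open set, by density of $O_{n+1}$, and therefore contains some nonempty basic clopen $[\phi_{n+1}]$.

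It then remains to see that $\bigcap_{n<\omega}[\phi_n]\neq\emptyset$, for any point of this intersection lies in $U\cap\bigcap_n O_n$ by the inclusions above. This is exactly the point where the classical proof would invoke compactness of the ambient topology, which is unavailable for $\ms S$; instead I would pass to the definable topology $\ms D$. Each $[\phi_n]$ is a basic \emph{closed} set for $\ms D$, the sequence is decreasing, and every finite subfamily has nonempty intersection — indeed, by nesting a finite intersection equals its member of largest index $[\phi_N]$, which is nonempty by construction. Since $\ms D$ is compact (the fact recalled above), the finite intersection property forces $\bigcap_{n<\omega}[\phi_n]\neq\emptyset$, and the proof is complete.

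The only genuine subtlety, and the step I expect to be the crux, is the deliberate mismatch of topologies: the neighbourhood-shrinking must be carried out in the fine topology $\ms S$, where the $[\phi]$ form a basis and regularity is immediate from their being clopen, whereas the limiting nonempty intersection is produced in the coarse topology $\ms D$, where the \emph{same} sets $[\phi]$ are closed and compactness applies. That the single family $\{[\phi]:\phi\in L^+\}$ is simultaneously a clopen spectral basis and a closed definable basis is precisely what lets the two halves of the argument dovetail. It is worth noting that no genuinely infinitary compactness as in Proposition \ref{KCOMP} is required here, since the Baire property concerns only countable families and the finite intersection property suffices.
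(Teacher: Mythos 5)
Your proof is correct and follows essentially the same route as the paper's: shrink nonempty basic sets $[\phi_n]$ inside the dense opens using the spectral topology, then obtain a point in $\bigcap_n[\phi_n]$ from the compactness of the definable topology, for which these same sets are closed. The remarks on clopenness and regularity are accurate but not needed beyond what the paper already uses.
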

\begin{proof}
Let $(U_i:i<\omega)$ be a countable family of dense spectrally open subsets of $S_x(T)$ and $O$ a non empty spectrally open subset of $S_x(T)$. As $O\neq\emptyset$, by definition of the spectral topology we may find a formula $\phi_0(x)$ such that $\emptyset\neq[\phi_0]\subset O$. Suppose by induction hypothesis that $n<\omega$ and we have found $(\phi_i(x):i\leq n)$ such that $\emptyset\neq[\phi_j]\subset [\phi_i]$ for $j\geq i$ and $[\phi_n]\subset O\cap(\bigcap_{i<n} U_i)$. As $U_n$ is dense and $[\phi_n]$ is open for the spectral topology, there exists a formula $\phi_{n+1}(x)$ such that $\emptyset\neq[\phi_{n+1}]\subset [\phi_n]\cap U_n\subset O\cap(\bigcap_{i<n+1} U_i)$. By induction, we find a decreasing family $([\phi_i])_{i<\omega}$ of basic definably closed sets : as the definable topology is compact, the intersection of the family is non empty and included in $O\cap(\bigcap_{i<\omega} U_i)$, which is then non empty, and $\bigcap_{i<\omega} U_i$ is dense : the spectral topology has the Baire property.
\end{proof}

Boolean combinations of positive formulas define subsets of positive type spaces, in the following way. For $\phi(x),\psi(x)$ positive, we know that $[\psi(x)]^c$ is a spectral open of $S_x(T)$, hence we may define $[\neg\psi]$ as $[\psi]^c$, and $[\phi\wedge\neg\psi]=[\phi]\cap [\neg\psi]$, a spectrally open set. As every Boolean combination $\chi$ of positive formulas is formally equivalent to a finite disjunction of formulas of the form $\phi\wedge\neg\psi$ with $\phi$ and $\psi$ positive, every such Boolean combination defines a subset of $S_x(T)$, which is spectrally open (and one checks that this does not depend on the representation of $\chi$).

\begin{defi}
Say that a formula $\phi$ of $L$ is \emph{constructible}, if it is a Boolean combination of positive formulas. Likewise, say that a subset $S$ of $S_x(T)$ is \emph{constructible}, if there exists a constructible formula $\phi(x)$ of $L$ such that $S=[\phi(x)]$.
\end{defi}

Now by definition every spectrally open subset of $S_x(T)$ is a union of constructible sets, and every constructible set is a spectral open, so the collection of constructible subsets is a basis for the spectral topology on $S_x(T)$. The notion of resultant may be extended to all constructible formulas in the context of positive logic, using the Kaiser hull of $T$.

\begin{defi}
If $\phi(x)$ is a constructible formula, define the \emph{constructible resultant of $\phi$ modulo $T$} as the set $Res_T^c(\phi)$ of all constructible formulas $\psi(x)$ such that $T_k\mo \forall x\ \phi\wedge\psi\imp \bot$.
\end{defi}

\begin{rem}
The constructible resultant is defined relatively to $T_k$, as compared to the (positive) resultant, defined relatively to $T_u$, because the sentences expressing the incompatibility of two constructible formulas are not h-universal in general, while they may be encoded in $T_k$.
\end{rem}

\begin{prop}
For every constructible formula $\phi(x)$, we have $[\phi(x)]^c=\bigcup\{[\psi(x)]:\psi\in Res_T^c(\phi)\}$.
\end{prop}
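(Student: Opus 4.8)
The plan is to exploit the fact that, unlike the positive resultant, the constructible resultant lives among a class of formulas closed under negation, which makes one of the two inclusions almost immediate. Throughout I would use the dictionary established for constructible formulas in the discussion preceding the definition, namely that $[\cdot]$ commutes with the Boolean connectives, so in particular $[\neg\phi]=[\phi]^c$. Combined with the fact that every $p\in S_x(T)$ is realised by some tuple $a$ in a positive model $M\mop T$, this yields the correspondence $p\in[\chi]\iff M\mo\chi(a)$ for every constructible $\chi(x)$, which is what reduces set-theoretic statements about brackets to satisfaction in realising positive models.

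For the inclusion $[\phi]^c\subseteq\bigcup\{[\psi]:\psi\in Res_T^c(\phi)\}$, I would simply observe that $\neg\phi$ is itself constructible and that $\phi\wedge\neg\phi$ is logically unsatisfiable, so that trivially $T_k\mo\forall x\ (\phi\wedge\neg\phi\imp\bot)$; hence $\neg\phi\in Res_T^c(\phi)$. Since $[\neg\phi]=[\phi]^c$ by the Boolean dictionary, the complement $[\phi]^c$ is already one of the sets occurring in the union, and the inclusion is forced.

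For the reverse inclusion, I would take an arbitrary $\psi\in Res_T^c(\phi)$ and a type $p\in[\psi]$, realised by $a$ in a positive model $M\mop T$. As positive models of $T$ are models of $T_k$, and $T_k\mo\forall x\ (\phi\wedge\psi\imp\bot)$ by definition of the constructible resultant, we get $M\mo\neg(\phi(a)\wedge\psi(a))$; together with $M\mo\psi(a)$ this forces $M\not\mo\phi(a)$, i.e. $p\notin[\phi]$. Thus $[\psi]\subseteq[\phi]^c$ for every $\psi\in Res_T^c(\phi)$, so the whole union is contained in $[\phi]^c$, and combining with the previous paragraph gives the equality.

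The only genuine content here, as compared with the resultant identity for \emph{positive} formulas in Proposition \ref{TOPSPEC}, is that constructibility is preserved under negation: this is precisely what allows $\neg\phi$ to serve as a witness inside the resultant and collapses the nontrivial inclusion. Accordingly, I do not expect a serious obstacle; the points that need care are the well-definedness of $[\cdot]$ on constructible formulas and its compatibility with satisfaction in a realising positive model (both recorded before the definition), together with the fact that positive models of $T$ satisfy $T_k$, which is what licenses passing from the entailment defining $Res_T^c(\phi)$ to satisfaction in $M$.
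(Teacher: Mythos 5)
Your proof is correct, but it takes a genuinely different and shorter route than the paper's. The paper proceeds by induction on the Boolean structure of $\phi$ (atomic, $\neg$, $\vee$, $\wedge$), reducing the atomic case to the identity $[\phi]^c=\bigcup\{[\psi]:\psi\in Res_T(\phi)\}$ for the \emph{positive} resultant from Proposition \ref{TOPSPEC} and then propagating through the connectives; this has the side benefit of exhibiting explicit witnesses in $Res_T^c(\phi)$ assembled from positive resultants of the positive constituents of $\phi$. You instead collapse the nontrivial inclusion in one stroke by observing that $\neg\phi$ is itself constructible, that $T_k\mo\forall x\,(\phi\wedge\neg\phi\imp\bot)$ holds for purely logical reasons, hence $\neg\phi\in Res_T^c(\phi)$ and $[\neg\phi]=[\phi]^c$ already appears in the union --- exactly the trick the paper uses only in its $\phi=\neg\psi$ case, promoted to the general case. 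This makes transparent that the proposition is essentially a formal consequence of the closure of constructible formulas under negation, at the price of leaning entirely on the two facts you rightly flag: the well-definedness of $[\cdot]$ on constructible formulas together with the equivalence $p\in[\chi]\iff M\mo\chi(a)$ for a realisation $a$ of $p$ in a positive model $M$ (which the paper only asserts via ``one checks that this does not depend on the representation of $\chi$''), and the fact that positive models of $T$ satisfy $T_k$, which converts the entailment defining $Res_T^c(\phi)$ into satisfaction in $M$. Your converse inclusion is argued exactly as the paper would; both arguments are sound.
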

\begin{proof}
We proceed by induction on the complexity of the formula $\phi$.\\
- If $\phi$ is atomic, then we have $[\phi]^c=\bigcup \{[\psi] : \psi\in Res_T(\phi)\}\subset \bigcup\{[\psi] : \psi\in Res_T^c(\phi)\}\subset [\phi]^c$, by definition of the resultant, whence the equality.\\
- If $\phi=\neg\psi$, we have $[\phi]^c=[\psi]\subset \bigcup\{[\theta] : \theta\in Res_T^c(\phi)\}$ (because $\psi\in Res_T^c (\neg\psi)$ !) $\subset [\phi]^c$, whence the equality.\\
- If $\phi=\psi\vee\chi$ and $(\theta,\lambda)\in Res_T^c(\psi)\xx Res_T^c(\chi)$, it is easy to check that $\theta\wedge\lambda\in Res_T^c(\psi\vee\chi)$, hence we get $[\phi]^c=[\psi]^c\cap [\chi]^c=\bigcup\{[\theta] : \theta\in Res_T^c(\psi)\}\cap \bigcup\{[\lambda] : \lambda\in Res_T^c(\chi)\}$ (by induction hypothesis) $\subset \bigcup\{[\zeta] : \zeta\in Res_T^c(\psi\vee\chi)\}\subset [\phi]^c$, whence the equality.\\
- If $\phi=\psi\wedge\chi$, we have $[\psi\wedge\chi]^c=[\bigvee Res_T^c(\psi)]\cup [\bigvee Res_T^c(\chi)]$ (by induction hypothesis) $\subset [\bigvee Res_T^c(\psi\wedge\chi)]\subset [\psi\wedge\chi]^c$, whence the equality.
\end{proof}

The end of this section illustrates how the spectral topology embraces a mild treatment of the negation in positive logic, in the form of constructible formulas and sets. Now every spectral open is by definition a union of "positively definable" subsets, and the constructible subsets form a finer basis of topology than the positively definable ones. The question arises of comparising constructible subsets to spectral opens. In general, there are more spectrally open than constructible subsets, otherwise every spectral open would have a spectrally open complement, which fails for instance in every positively model complete theory in which not every type definable set is definable. We discuss an example where we invoke classical model theory.

\begin{ex}
Consider the theory $T$ of real fields in the language $L=(+,\xx,-,0,1)$ of rings, the positive models of which are the real closed fields, which are axiomatisable by a positive model complete theory $T^*$; notice that here the spectral and the definable topologies coincide by Proposition \ref{SPEC}. It is easy to see that in $\RR$, Cantor's triadic set $\mf K$ is (positively) type definable without parameters, by a partial positive type $\pi(x)$ in one variable. This type defines in $S_1(T)$ a definably closed set $[\pi(x)]$, which is then spectrally closed. Now suppose that $[\pi(x)]$ is spectrally open in $S_1(T)$ : there exists a family $(\phi_i(x))_I$ of formulas in one variable such that $[\pi(x)]=\bigcup_{i\in I} [\phi_i]$, and as there are only $|L|=\omega$ such formulas, we may suppose that $I$ is at most countable. As $\RR$ is a real closed field, we have $\mf K=\bigcup_{i<\omega} \phi_i(x)^\RR$ and as $\RR$ is o-minimal as considered as a classical $L$-structure, every $\phi_i(x)^\RR$ is a finite union of intervals, hence $\mf K$ is a countable union of intervals $\mf K=\bigcup_{i<\omega} I_i$. As $|\mf K|>\omega$, one of those contains at least two points, contradicting the fact that $\mf K$ has empty interior. This means that $S_1(T)-[\pi(x)]$ is open but not constructible.
\end{ex}

\section{Geometric formulas and types}\label{GEOM}
In this section we study the relationships between the spectral topology and geometric logic, and show that in this context the analogues of partial positive types define all the subsets of $S_x(T)$. From now on, we set $\kappa=|L_{\omega\omega}|$ and refer to the introduction for the definition of the class $L_{\oo\omega}^g$ of geometric formulas.

\begin{lem}[Disjunctive normal form]
Every geometric formula is logically equivalent to a disjunction of positive primitive formulas. In particular, every geometric formula is equivalent to one in $L_{2^\kappa\omega}$.
\end{lem}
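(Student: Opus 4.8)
The plan is to argue by induction on the structure of geometric formulas, using the inductive definition of the class $L^g_{\oo\omega}$ recalled in the introduction. Let $\Phi$ denote the class of all formulas that are logically equivalent to a disjunction of positive primitive formulas (existentially quantified finite conjunctions of atomic formulas). Since $L^g_{\oo\omega}$ is by definition the smallest class of formulas containing the atomic formulas and closed under finite conjunction, existential quantification over a single variable, and arbitrary disjunction, it suffices to show that $\Phi$ contains every atomic formula and is closed under these three operations; minimality then yields $L^g_{\oo\omega}\subseteq\Phi$, which is exactly the assertion.

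The base case and two of the three closure properties are immediate. An atomic formula is itself positive primitive (a conjunction of a single atom, with empty existential block), hence lies in $\Phi$. For arbitrary disjunction, if each $\phi_i$ is equivalent to a disjunction $\bigvee_j \pi_{ij}$ of positive primitive formulas, then $\bigvee_i \phi_i$ is equivalent to the single disjunction $\bigvee_{i,j}\pi_{ij}$ of positive primitive formulas. For existential quantification over a single variable $z$, I would use that $\exists z$ commutes with arbitrary disjunctions: if $\phi$ is equivalent to $\bigvee_i \pi_i$, then $\exists z\,\phi$ is equivalent to $\bigvee_i \exists z\,\pi_i$, and each $\exists z\,\pi_i$ is obtained from $\pi_i=\exists \y\,\alpha_i$ by adjoining $z$ to the existential block (renaming if necessary), hence is again positive primitive.

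The one case requiring care is closure under finite conjunction, which I expect to be the main obstacle. Given $\phi\equiv\bigvee_i\pi_i$ and $\psi\equiv\bigvee_j\sigma_j$ with $\pi_i,\sigma_j$ positive primitive, the first step is the (set-theoretically obvious, but genuinely infinitary) distributive law $\phi\wedge\psi\equiv\bigvee_{(i,j)}(\pi_i\wedge\sigma_j)$, valid in $L_{\oo\omega}$ because a given structure and assignment satisfy $\phi\wedge\psi$ exactly when they satisfy some $\pi_i$ and some $\sigma_j$. It then remains to see that each conjunct $\pi_i\wedge\sigma_j$ is equivalent to a positive primitive formula. Writing $\pi_i=\exists \y\,\alpha$ and $\sigma_j=\exists \z\,\beta$ with $\alpha,\beta$ finite conjunctions of atoms, I would first rename the bound variables so that $\y$ and $\z$ are disjoint and avoid the free variables of both conjuncts (capture-avoiding $\alpha$-conversion), after which the quantifiers may be pulled out to give $\exists \y\z\,(\alpha\wedge\beta)$, a positive primitive formula. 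The only real subtlety is the bookkeeping of this renaming; everything else is formal.

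Finally, for the "in particular" clause I would argue by cardinality. Positive primitive formulas are finitary, so there are at most $|L_{\omega\omega}|=\kappa$ of them, whence any disjunction of positive primitive formulas has at most $\kappa$ disjuncts and is therefore a formula of $L_{\kappa^+\omega}$. Since $\kappa<2^\kappa$ gives $\kappa^+\leq 2^\kappa$, such a formula lies in $L_{2^\kappa\omega}$, completing the proof.
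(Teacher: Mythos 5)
Your proposal is correct and follows essentially the same structural induction as the paper: atomic base case, distributing finite conjunctions over the disjunctions, commuting $\exists$ with disjunctions, flattening nested disjunctions, and then bounding the number of disjuncts by the number ($\leq\kappa$) of positive primitive formulas. Your explicit treatment of the variable renaming and quantifier extraction needed to turn a conjunction of p.p.\ formulas into a single p.p.\ formula is a detail the paper leaves implicit, but it is the same argument.
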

\begin{proof}
For atomic formulas, there is nothing to prove. Suppose that $(\phi_i:i<m)$ is a finite set of geometric formulas, each equivalent to a disjunct $\bigvee \Phi_i$ of a set of p.p. formulas. The formula $\bigwedge_{i<m} \phi_i$ is equivalent to $\bigvee\{ \bigwedge_{i<m} \psi_i : \psi_i\in \Phi_i, i<m\}$. Suppose that $\phi$ is logically equivalent to a disjunct $\bigvee \Phi$ of a set of p.p. formulas : if $x$ is any variable, the formula $\exists x\phi$ is logically equivalent to the disjunct $\bigvee\{\exists x\psi : \psi\in \Phi\}$ of p.p. formulas. Finally, if $\Phi$ is a set of geometric formulas, each $\phi\in \Phi$ being equivalent to a disjunct $\bigvee \Psi_\phi $ of a set of p.p. formulas, the disjunct $\bigvee \Phi$ is logically equivalent to the disjunct $\bigvee(\bigcup\{ \Psi_\phi :\phi\in \Phi\})$. This proves the existence of a disjunctive form.\\
Now as the set of positive primitive formulas of $L$ has cardinality at most $\kappa=|L|$, any set $\Phi$ of p.p. formulas has cardinality at most $\kappa$, hence $\bigvee \Phi\in L_{2^\kappa\omega}$, which establishes the second part.
\end{proof}

\begin{defi}
(i) A \emph{disjunctive normal form} of a geometric formula is an equivalent disjunction of p.p. formulas, by the lemma.\\
(ii) We will say that a geometric formula is \emph{normal}, if it is a disjunctive normal form.
\end{defi}

By the lemma, the class of all geometric formulas $\phi(x)$ satisfied in an $L$-structure $M$ by a point $a\in M_x$ is determined by the set of normal geometric formulas $\phi(x)$ such that $M\mo \phi(a)$.

\begin{defi}
(i) If $M$ is an $L$-structure, $x$ is a finite tuple of variables and $a\in M_x$, the \emph{geometric type of $a$ in $M$}, noted $tp^g_M(a)$, will denote the set of normal geometric formulas $\phi(x)$ such that $M\mo \phi(a)$.\\
(ii) More generally, by a \emph{geometric type in variables $x$}, we will mean a set $\pi(x)$ of geometric formulas; we will call it \emph{normal} if it contains normal formulas only.
\end{defi}

Now let $p(x)$ be a positive type over $T$, and $a,b$ two realisations of $p$ in positive models $M,N$ of $T$. If $\phi(x)=\bigvee \Phi(x)$ is a normal geometric formula such that $M\mo \phi(a)$, there exists $\psi\in \Phi$ such that $M\mo \psi(a)$, and hence $\psi\in p$, which means that $N\mo \psi(b)$, and $N\mo \phi(b)$. In other words, every positive type $p(x)$ determines a unique set $p^*(x)$ of normal geometric formulas. We complete this remark as the following

\begin{prop}\label{TYPGEO}
The map $p\mapsto p^*$, which maps a positive type $p(x)$ to the geometric type of any of its realisations, is a bijection from $S_x(T)$ to the set of normal geometric types which are consistent with $T$ and maximal with this property.
\end{prop}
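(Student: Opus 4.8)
The plan is to exploit the disjunctive normal form in order to reduce everything to the positive primitive fragment, together with the preservation of geometric formulas under homomorphisms. First I would record the concrete description of the image: since every normal formula has the shape $\bigvee\Phi$ with $\Phi$ a set of p.p. formulas, and p.p. formulas are positive, the realisation argument preceding the statement gives $\bigvee\Phi\in p^*$ if and only if $\Phi\cap p\neq\emptyset$. In particular $p^*$ determines, and is determined by, the set of p.p. formulas lying in $p$; and since every positive formula is, by distribution, equivalent to a disjunction of p.p. formulas, a positive type is entirely fixed by its p.p. fragment. Injectivity then follows at once: if $p\neq q$, pick a positive $\phi\in p\setminus q$; some p.p. disjunct $\psi$ of $\phi$ lies in $p$, while no disjunct of $\phi$ lies in $q$ (as $\phi\notin q$), so this single p.p. formula $\psi$, read as a one-term disjunction, lies in $p^*\setminus q^*$.

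The second ingredient is that every normal geometric type $\pi$ consistent with $T$ is realised in a \emph{positive} model. Indeed, if $b$ realises $\pi$ in some $M\mo T$, then Fact \ref{INDPEC} continues $M$ by a homomorphism $f\colon M\to N$ into a positive model $N$ of $T$, and since atomic formulas, and hence (being built from them by conjunction, disjunction and existential quantification) all geometric formulas, are preserved by homomorphisms, $fb$ realises $\pi$ in $N$. This is the step I expect to carry the real weight: it is exactly where the geometric, homomorphism-invariant character of the formulas is essential, and where a purely positive argument would not suffice.

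With these two points in hand the remainder is routine bookkeeping. First, each $p^*$ is consistent, being realised by any realisation of $p$ in a positive model, which exists because the positive types of $S_x(T)$ are precisely those of tuples in positive models of $T$. For surjectivity and maximality I would argue uniformly: given a maximal consistent $\pi$, realise it by $b$ in a positive model $N$ and set $q=tp^+_N(b)\in S_x(T)$; then $q^*=tp^g_N(b)\supseteq\pi$ is itself consistent, so maximality of $\pi$ forces $\pi=q^*$, exhibiting $\pi$ in the image. The same computation yields maximality of an arbitrary $p^*$: if a consistent normal type $\pi\supseteq p^*$ is realised by $b$ in a positive model $N$ with $q=tp^+_N(b)$, then $\pi\subseteq q^*$, while $p^*\subseteq\pi\subseteq q^*$ forces every p.p. formula of $p$ into $q$, hence $p\subseteq q$ and, by maximality of positive types, $p=q$; therefore $q^*=p^*$ and $p^*\subseteq\pi\subseteq q^*=p^*$ gives $\pi=p^*$.
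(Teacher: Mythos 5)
Your proof is correct and follows essentially the same route as the paper's: pass to realisations in positive models via the homomorphism-invariance of geometric formulas, and use the disjunctive normal form to extract a p.p.\ disjunct lying in the (maximal) positive type. You merely make explicit a step the paper leaves implicit, namely that a positive type is determined by its p.p.\ fragment, which cleanly yields injectivity and the identification $p=q$ in the maximality argument.
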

\begin{proof}
First, if $p^*(x)\subset \pi(x)$, a geometric type consistent with $T$, let $a\in M_x$ be a realisation of $\pi$ in a model $M$ of $T$. As the validity of geometric sentences with parameters is preserved under homomorphisms, we may suppose that $M$ is a positive model of $T$. Let $\phi\in \pi$ : there exists a set $\Phi$ of p.p. formulas such that $\phi=\bigvee \Phi$, so there is $\psi\in \Phi$ such that $M\mo \psi(a)$, whence $\psi\in p$ by maximality of $p$, so $\phi\in p^*$, and $p^*=\pi$ is maximal as a geometric type consistent with $T$.\\
Secondly, if $q$ is a maximal geometric type consistent with $T$, realise $q$ by a point $a\in M_x$ of a positive model $M$ of $T$ : the positive type $p=tp^+_M(a)$ is an element of $S_x(T)$ such that $p^*=q$ by definition of $p^*$, so the map is surjective. Now if $p,q\in S_x(T)$ and $p^*=q^*$, any realisation of $p^*$ is a realisation of both $p$ and $q$, hence $p=q$ by their maximality, and the map is injective.
\end{proof}

All this means that any geometric formula $\phi(x)$ defines a subset of $S_x(T)$, by $[\phi(x)]=\{p\in S_x(T) : p^*\mo \phi\}$, in which by $p^*\mo \phi$ we mean that for some (any) normal form $\psi$ of $\phi$, we have $\psi\in p^*$. Now the following (the second part of which could alternatively be proved using Lemma 17 of \cite{BYP}) should be obvious.

\begin{cor}
The subsets of the form $[\phi(x)]$, for a geometric formula $\phi(x)$, are exactly the spectrally open subsets of $S_x(T)$, and for every finite tuple $y$ of variables, the canonical projection map $\pi:S_{xy}(T)\to S_x(T)$ is open for the spectral topology.
\end{cor}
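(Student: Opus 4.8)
The plan is to treat the two assertions separately, each reducing to the disjunctive normal form lemma together with the observation that every positive existential formula is itself geometric, i.e. $L^+\subset L^g_{\oo\omega}$.

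For the first assertion I would argue by double inclusion at the level of the basis. Given a geometric formula $\phi(x)$ with disjunctive normal form $\bigvee\Phi$, the convention $p^*\mo\phi$ unwinds to $[\phi]=\bigcup\{[\psi]:\psi\in\Phi\}$, a union of sets $[\psi]$ with $\psi$ positive primitive, hence positive; since each $[\psi]$ is a basic spectral open, $[\phi]$ is spectrally open. Conversely, an arbitrary spectral open is by definition $U=\bigcup_{i\in I}[\phi_i]$ with each $\phi_i\in L^+$; since $L^+\subset L^g_{\oo\omega}$ and the geometric formulas are closed under arbitrary disjunctions, the formula $\phi=\bigvee_{i\in I}\phi_i$ is geometric and satisfies $[\phi]=\bigcup_i[\phi_i]=U$, using once more that $[\bigvee_i\phi_i]=\bigcup_i[\phi_i]$. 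This shows that the $[\phi]$ for $\phi$ geometric are exactly the spectral opens.

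For the openness of $\pi:S_{xy}(T)\to S_x(T)$, I first record that this map is the restriction $q\mapsto q\cap L^+(x)$ of a positive type in the tuple $xy$ to the positive formulas in $x$ alone, which coincides with the positive type $tp^+_M(a)$ of the $x$-part of any realisation $(a,b)$ of $q$ in a positive model $M$ of $T$. Since $\pi(\bigcup_i U_i)=\bigcup_i\pi(U_i)$ and the sets $[\phi(x,y)]$ with $\phi\in L^+$ form a basis of the spectral topology on $S_{xy}(T)$, it suffices to show that $\pi([\phi(x,y)])$ is open. Here I would prove the equality $\pi([\phi(x,y)])=[\exists y\,\phi(x,y)]$ through realisations: if $\phi\in q$ and $(a,b)$ realises $q$ in a positive $M$, then $M\mo\exists y\,\phi(a,y)$, so $\exists y\,\phi\in\pi(q)$; conversely, if $\exists y\,\phi\in p$, realise $p$ by $a$ in a positive $M$, choose $b$ with $M\mo\phi(a,b)$, and set $q=tp^+_M(a,b)$, which lies in $[\phi(x,y)]$ and satisfies $\pi(q)=p$. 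As $\exists y\,\phi$ is again positive existential, $[\exists y\,\phi(x,y)]$ is a basic spectral open of $S_x(T)$, so $\pi$ carries the basis to opens, hence every open to an open.

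The content is light enough that I expect no genuine obstacle, the crucial points being simply that the positive existential formulas sit inside the geometric fragment and are stable under the existential quantification used to compute direct images. The only step demanding care is the argument for $\pi$: one must legitimately pass to a positive model when realising types (by the cited realisation fact), and must check that the quantifier computation $\pi([\phi])=[\exists y\,\phi]$ keeps the image \emph{positive existential}, so that it lands in the spectral basis and not merely in the larger geometric class.
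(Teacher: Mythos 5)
Your proof is correct and follows essentially the same route as the paper: disjunctive normal form for the first assertion, and the identity $\pi([\phi(x,y)])=[\exists y\,\phi(x,y)]$ for the second (which the paper states for a general geometric $\phi$ and leaves as "easy to check", while you verify it on the positive basic opens via realisations in positive models). The extra care you take — checking through realisations that the restriction map computes the projection and that $\exists y\,\phi$ stays in the positive fragment — just fills in the verification the paper omits.
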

\begin{proof}
Using a disjunctive normal form $\bigvee \Phi(x)$ for a geometric formula $\phi$, we see that $[\phi(x)]=\bigcup \{[\psi] : \psi\in \Phi\}$, hence is open for the spectral topology. Conversely, if $O$ is open for the spectral topology, find a family $(\phi_i(x))_I$ of positive formulas such that $O=\bigcup_I [\phi_i(x)]$ : we have $O=[\bigvee\{\phi_i:i\in I\}]$, and as $\bigvee\{\phi_i:i\in I\}$ is geometric, this is enough for the first part.
As for the second, if $[\phi(x,y)]$ is spectrally open in $S_{xy}(T)$ with $\phi$ geometric, it is easy to check that $\pi([\phi(x,y)])$ is the open subset defined by the geometric formula $\exists y\phi(x,y)$.
\end{proof}

For every positive type $p\in S_x(T)$, the singleton $\{p\}=\bigcap\{[\phi(x)]\in L^+: \phi\in p\}$ is closed for the definable topology, hence for the spectral topology. Explicitly, we may describe its spectrally open complement $S_x(T)-\{p(x)\}=\bigcup_{\phi\in p} \bigcup_{\psi\in Res_T(\phi)} [\psi(x)]$. Now the spectral topology of $S_x(T)$ may itself be construed as a collection of basic closed sets for a certain topology, which is analogous to the definable topology, in the sense that the basic closed sets have the form $[\phi(x)]$ for any geometric formula $\phi(x)$. What precedes shows that every singleton is open for this topology, which is then trivial. In other words, we have the

\begin{prop}\label{GEODEF}
For every subset $X$ of $S_x(T)$, there exists a geometric type $\pi(x)$ such that $X=[\pi(x)]$, where $[\pi(x)]=\{p\in S_x(T) : \forall \phi\in \pi,\ p^*\mo\phi\}$.
\end{prop}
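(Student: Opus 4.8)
The plan is to realise $X$ as an intersection of spectrally open sets and then to translate each of these opens into a geometric formula. The point of departure is a purely set-theoretic identity: for any subset $X\subseteq S_x(T)$ one has
\[
X=\bigcap_{p\notin X}\big(S_x(T)-\{p\}\big),
\]
since a type $q$ belongs to the right-hand side precisely when $q\neq p$ for every $p\notin X$, that is, precisely when $q\in X$. This is exactly the phenomenon described in the paragraph preceding the statement: declaring the sets $[\phi]$ with $\phi$ geometric to be basic closed sets yields a topology in which every singleton is open (its complement being such a basic closed set), so the topology is discrete and every subset is an intersection of basic closed sets.

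Next I would bring in the two facts already at hand. First, the explicit computation before the statement shows that each complement of a singleton $S_x(T)-\{p\}=\bigcup_{\phi\in p}\bigcup_{\psi\in Res_T(\phi)}[\psi(x)]$ is spectrally open. Second, by the Corollary above, every spectrally open subset of $S_x(T)$ is of the form $[\theta(x)]$ for a suitable geometric formula $\theta(x)$. Applying the second fact to the open sets furnished by the first, I choose, for each $p\notin X$, a geometric formula $\theta_p(x)$ with $S_x(T)-\{p\}=[\theta_p(x)]$, and I set $\pi(x)=\{\theta_p(x):p\notin X\}$.

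It then remains only to unwind the definition of $[\pi(x)]$. Since $[\pi(x)]=\{q\in S_x(T):\forall\phi\in\pi,\ q^*\mo\phi\}=\bigcap_{p\notin X}[\theta_p]$, we obtain
\[
[\pi(x)]=\bigcap_{p\notin X}[\theta_p]=\bigcap_{p\notin X}\big(S_x(T)-\{p\}\big)=X,
\]
which is the desired conclusion; the degenerate cases $X=S_x(T)$ (empty $\pi$, vacuous intersection equal to the whole space) and $X=\emptyset$ (intersection over all $p$) are covered by the same formula.

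I expect no genuine obstacle here, as all the substance has been front-loaded into the two inputs above: the openness of singleton complements and the identification of spectral opens with sets defined by geometric formulas. The only point requiring attention is the bookkeeping between the \emph{set} $\pi$ of geometric formulas and the arbitrary intersection $\bigcap_{p\notin X}[\theta_p]$, together with the verification that the bracket $[\pi(x)]$ of the statement genuinely is that intersection; both are immediate from the definition $[\pi(x)]=\{p:\forall\phi\in\pi,\ p^*\mo\phi\}$.
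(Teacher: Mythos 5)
Your argument is correct and is essentially the paper's own: the proposition is stated as a direct consequence of the preceding paragraph, which observes that singleton complements are spectrally open (hence of the form $[\theta_p]$ for geometric $\theta_p$ by the Corollary) and that the "topology" with the spectral opens as basic closed sets is therefore discrete, so every subset is an intersection of such basic closed sets. You have merely made explicit the bookkeeping ($X=\bigcap_{p\notin X}[\theta_p]=[\pi]$ with $\pi=\{\theta_p:p\notin X\}$) that the paper leaves implicit, which is a faithful rendering of the intended proof.
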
 

In particular, the geometric types have a uniform complement in positive models of $T$. For a geometric type $\pi(x)$, there exists an alternative description of this complement, which goes as follows. Let $\{\pi_i(x):i\in I\}$ be a family of geometric types. We define the \emph{disjunction} $\bigvee_I \pi_i(x)$ as the geometric type $\{\bigvee_{i\in I} \phi_i(x):\forall i\in I,\phi_i\in \pi_i\}$.

\begin{lem}\label{DIJTYP}
The type $\bigvee_I \pi_i$ is logically equivalent to the disjunction of the $\pi_i$'s.
\end{lem}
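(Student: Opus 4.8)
The plan is to read the stated equivalence as an equality of subsets of the type space, namely $[\bigvee_I \pi_i] = \bigcup_{i\in I}[\pi_i]$, where for a geometric type $\pi$ the set $[\pi]$ is the one furnished by Proposition \ref{GEODEF}. The workhorse throughout will be the realization characterization: given $p\in S_x(T)$, fix once and for all a realization $a\in M_x$ of $p$ in a positive model $M$ of $T$; then for every geometric formula $\phi(x)$ one has $p^*\mo\phi$ if and only if $M\mo\phi(a)$. Indeed, a normal form $\psi$ of $\phi$ is logically equivalent to $\phi$, so $\psi\in p^*$ (which is what $p^*\mo\phi$ means) amounts to $M\mo\psi(a)$, hence to $M\mo\phi(a)$; and this is independent of the chosen realization since, as noted before Proposition \ref{TYPGEO}, all realizations of $p$ share the same geometric type $p^*$. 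This reduces every assertion about $p^*\mo$ to ordinary Tarskian satisfaction in $M$, where the semantics of an infinitary disjunction is transparent.

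First I would prove the inclusion $\bigcup_{i\in I}[\pi_i]\subseteq[\bigvee_I\pi_i]$. Suppose $p\in[\pi_j]$ for some $j\in I$, and let $\bigvee_{i\in I}\phi_i$ be an arbitrary element of $\bigvee_I\pi_i$, so that $\phi_i\in\pi_i$ for each $i$; in particular $\phi_j\in\pi_j$, whence $p^*\mo\phi_j$, i.e. $M\mo\phi_j(a)$. As $\phi_j$ is one of the disjuncts, $M\mo(\bigvee_{i\in I}\phi_i)(a)$, that is $p^*\mo\bigvee_{i\in I}\phi_i$. Since the chosen disjunction was arbitrary, $p\in[\bigvee_I\pi_i]$. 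This direction uses no choice: a single disjunct already witnesses membership.

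For the converse I would argue by contraposition. Assume $p\notin\bigcup_{i\in I}[\pi_i]$, so that $p\notin[\pi_i]$ for every $i\in I$, which unwinds to the existence of some $\phi_i\in\pi_i$ with $p^*\not\mo\phi_i$, i.e. $M\not\mo\phi_i(a)$. Selecting one such $\phi_i$ for each $i$ produces a disjunction $\bigvee_{i\in I}\phi_i$ lying in $\bigvee_I\pi_i$. Since no disjunct is satisfied by $a$ in $M$, the infinitary disjunction fails, $M\not\mo(\bigvee_{i\in I}\phi_i)(a)$, hence $p^*\not\mo\bigvee_{i\in I}\phi_i$ and therefore $p\notin[\bigvee_I\pi_i]$. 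This yields $[\bigvee_I\pi_i]\subseteq\bigcup_{i\in I}[\pi_i]$ and completes the equality.

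I expect the only genuine point, as opposed to bookkeeping, to be the justification of the realization characterization and, through it, the fact that satisfaction of an arbitrary disjunction in $M$ reduces to satisfaction of a single disjunct; this is exactly the infinitary disjunction clause already invoked for normal forms. Once that reduction is in place both inclusions are immediate, and the degenerate cases (some $\pi_i$ empty, forcing $\bigvee_I\pi_i=\emptyset$ and both sides equal to $S_x(T)$) are automatically consistent and need no separate treatment.
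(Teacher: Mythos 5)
Your core argument is exactly the paper's: one satisfied disjunct witnesses membership in the disjunction type, and a choice of one failing formula from each $\pi_i$ produces a single element of $\bigvee_I\pi_i$ that fails, giving the converse. The only real difference is the framing. The paper reads ``logically equivalent'' literally and proves $\bigl(\bigvee_I\pi_i\bigr)^M=\bigcup_I\pi_i(x)^M$ for an \emph{arbitrary} $L$-structure $M$ and arbitrary $a\in M_x$, with no mention of $T$, type spaces, or positive models; you instead recast the statement as $[\bigvee_I\pi_i]=\bigcup_I[\pi_i]$ in $S_x(T)$ and route every verification through a realization of $p$ in a positive model. That detour costs you generality: your version only speaks about tuples realizing positive types in positive models of $T$, whereas the lemma as stated (and as silently used in the computation $\bigcup_{\phi\in\pi}\neg_T\phi(x)^M=\neg_T\pi(x)^M$ inside Proposition \ref{GEOCOMP}) is a purely semantic fact about any structure. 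Since your two inclusions never actually use positivity of $M$ or maximality of $p$ --- only Tarskian satisfaction at the point $a$ --- you recover the full statement by simply deleting the type-space packaging and letting $M$ and $a$ be arbitrary, which is what the paper does. Your treatment of the degenerate case (some $\pi_i$ empty) is correct and, as you say, needs no separate argument.
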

\begin{proof}
Write $\pi(x)=\bigvee_I \pi_i(x)$. Let $M$ be an $L$-structure and $a\in \bigcup_I \pi_i(x)^M$ : there is $i_0\in I$ such that $a\in \pi_{i_0}(x)^M$, so let $\phi(x)=\bigvee_I \phi_i(x)\in \pi(x)$ : we have $M\mo \phi_{i_0}(a)$, so $M\mo\phi(a)$, hence $a\in\pi(x)^M$. Reciprocally, suppose that $a\notin \bigcup_I\pi_i(x)^M$ and let $i\in I$ : we have $a\notin \pi_i(x)^M$, so there is $\phi_i(x)\in \pi_i(x)$ such that $M\not\mo \phi_i(a)$. This means that $M\not\mo \bigvee_I \phi_i(a)$, hence $a\notin \pi(x)^M$, because this last formula $\bigvee_I \phi_i(x)$ is in $\pi(x)$.
\end{proof}

\begin{prop}\label{GEOCOMP}
Every geometric type $\pi(x)$ has a uniform geometric complement in positive models of $T$ (i.e., there exists a geometric type $\neg_T\pi(x)$ such that $\neg_T\pi(x)^M$ defines the complement of $\pi(x)^M$ in every positive model $M$ of $T$).
\end{prop}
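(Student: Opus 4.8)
The plan is to construct $\neg_T\pi$ explicitly from the material just assembled, namely the disjunction of geometric types together with Lemma \ref{DIJTYP}, rather than merely invoke the abstract existence coming from Proposition \ref{GEODEF}. The reason to work with geometric \emph{types} rather than single geometric formulas is that the naive "negation" of a geometric formula involves an infinite conjunction, which geometric logic forbids; keeping this conjunction as a set of formulas sidesteps the obstruction, since the realisation set of a type $\sigma$ in a structure $M$ is by definition the intersection $\bigcap_{\theta\in\sigma}\theta^M$ of the realisation sets of its members.

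First I would reduce to a single geometric formula. For each $\phi\in\pi$ I would fix a disjunctive normal form $\phi\equiv\bigvee\Phi_\phi$, with $\Phi_\phi$ a set of positive primitive (in particular positive) formulas, and set $\sigma_\phi=\{\bigvee Res_T(\chi):\chi\in\Phi_\phi\}$. Each $\bigvee Res_T(\chi)$ is an arbitrary disjunction of positive formulas, hence geometric, so $\sigma_\phi$ is a geometric type. The key computation is then: in any positive model $M$ of $T$, Fact \ref{POSCAR} gives $M_x-\chi^M=(\bigvee Res_T(\chi))^M$ for each positive $\chi$, whence $\sigma_\phi^M=\bigcap_{\chi\in\Phi_\phi}(M_x-\chi^M)=M_x-\bigcup_{\chi\in\Phi_\phi}\chi^M=M_x-\phi^M$. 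Thus $\sigma_\phi$ is a uniform geometric complement of the single formula $\phi$ in positive models.

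Next I would assemble these over all $\phi\in\pi$. Since $\pi^M=\bigcap_{\phi\in\pi}\phi^M$ in any structure, taking complements in a positive model gives $M_x-\pi^M=\bigcup_{\phi\in\pi}(M_x-\phi^M)=\bigcup_{\phi\in\pi}\sigma_\phi^M$. I would therefore define $\neg_T\pi=\bigvee_{\phi\in\pi}\sigma_\phi$, which is again a geometric type, its members being disjunctions of positive formulas. By Lemma \ref{DIJTYP} we have $(\neg_T\pi)^M=\bigcup_{\phi\in\pi}\sigma_\phi^M$ in every $L$-structure, and combining this with the previous display yields $(\neg_T\pi)^M=M_x-\pi^M$ in every positive model $M$ of $T$, as required.

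I expect the only genuine subtlety to be the one already flagged: organising the two levels correctly so that neither an infinite conjunction (the inner "all disjuncts of $\phi$ fail") nor a naive negation escapes the geometric fragment. The inner level is absorbed into the type $\sigma_\phi$ via intersection of realisation sets, while the outer disjunction over $\phi\in\pi$ is handled by the type-disjunction operation and Lemma \ref{DIJTYP}; the single essential model-theoretic input is Fact \ref{POSCAR}, which is exactly what converts the positive-logic complement of a positive formula into a geometric (disjunctive) one. I would also remark that the bare existence of a complementing type already follows from Proposition \ref{GEODEF} applied to the set $[\pi]^c$, but that route would still require the observation that $\pi^M$ is the preimage of $[\pi]$ under the type map in order to descend from type spaces to models, so the explicit construction above is the cleaner path and also furnishes a canonical $\neg_T\pi$.
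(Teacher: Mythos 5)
Your construction is correct and is essentially identical to the paper's: your $\sigma_\phi$ is the paper's $\neg_T\phi=\{\bigvee Res_T(\phi_i):i\in I\}$ built from a disjunctive normal form of $\phi$, and your $\neg_T\pi=\bigvee_{\phi\in\pi}\sigma_\phi$ is exactly the paper's type-disjunction $\bigvee\{\neg_T\phi:\phi\in\pi\}$, justified by Lemma \ref{DIJTYP} and the resultant property (Fact \ref{POSCAR}). No gaps; you have simply made explicit the two invocations that the paper leaves partly implicit.
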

\begin{proof}
We start with a geometric formula $\phi(x)$. Up to logical equivalence, we may suppose that $\phi(x)=\bigvee_I \phi_i(x)$ is normal, i.e. with $\phi_i$ positive primitive for each $i\in I$. If $i\in I$, let $\Phi_i(x)=Res_T(\phi_i(x))$, a set of positive formulas : the formula $\bigvee\Phi_i(x)$, which we note $\neg_T\phi_i(x)$, is geometric. Let $\neg_T\phi(x)=\{\neg_T\phi_i(x):i\in I\}$ ; this is a geometric type, and if $M\mop T$, we have $M_x-\phi(x)^M=M_x-\bigcup_{i\in I} \phi_i(x)^M=\bigcap_{i\in I} (M_x-\phi_i(x)^M)=\bigcap_{i\in I} \neg_T\phi_i(x)^M$ (by the properties of the resultant) $=(\neg_T\phi(x))^M$, so the geometric type $\neg_T\phi(x)$ defines the complement of the geometric formula $\phi(x)$ in every positive model of $T$.\\
If now $\pi(x)$ is a geometric type, we define $\neg_T\pi(x)$ as the geometric type $\bigvee \{\neg_T\phi(x) : \phi(x)\in \pi(x)\}$, and we contend that $\neg_T\pi$ defines the complement of $\pi$ in every positive model of $T$. Indeed, if $M\mop T$, we have $M_x-\pi(x)^M=M_x-\bigcap_{\phi\in \pi}\phi(x)^M= \bigcup_{\phi\in \pi} (M_x-\phi(x)^M)=\bigcup_{\phi\in\pi} \neg_T\phi(x)^M$ (by what precedes on the complements of geometric formulas) $=\neg_T\pi(x)^M$. 
\end{proof}

In conclusion, the subsets of positive type spaces are essentially "abstract definitions" for subsets of positive models of $T$ defined by geometric types. In particular, geometric types are closed under arbitrary disjunctions and conjunctions, as well as negations and existential quantifications. With these properties they must interpret all the formulas in $L_{\oo\omega}$, which we make precise in the following section.

\section{Existential models and infinitary logic}\label{EXINF}

We adapt the notion of an existentially universal structure, defined originally in terms of existential types in the context of inductive classes of extensions, to the present context of inductive classes of homomorphisms and partial positive types (see \cite{FAD}, section 1.2).

\begin{defi}
Say that a structure $M$ in a class $\C$ is \emph{(positively) existentially universal (in $\C$)}, if for every partial positive type $\pi(x,y)$ in finitely many variables and every point $b\in M_y$ such that $\pi(x,b)$ is realised in a continuation of $M$ in $\C$, there is $a\in M_x$ such that $M\mo \pi(a,b)$. 
\end{defi}

\begin{rem}
Any existentially universal structure in $\C$ is positively existentially closed; this is a strengthening of the notion.
\end{rem}

\begin{prop}
If $\C$ is an inductive class, then every structure in $\C$ has a continuation into an existentially universal structure of $\C$.
\end{prop}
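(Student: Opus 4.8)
The plan is to obtain $M^+$ as a directed colimit of an $\omega$-chain of continuations $M=M_0\to M_1\to\cdots$, where each step $M_{n+1}$ is produced by an auxiliary operation $\Phi$ that realises, in a single continuation, all the partial positive types with parameters in the current structure that can still be realised. Since $\C$ is inductive it is closed under the directed colimit, so $M^+\in\C$ and the composite $M\to M^+$ is a continuation; it then remains only to arrange that $M^+$ is existentially universal. The operation $\Phi(P)$ is itself built as a continuous (possibly transfinite) chain: fix an enumeration $(\pi_\alpha(x),b_\alpha)_{\alpha<\lambda}$ of all pairs consisting of a partial positive type $\pi_\alpha$ in finitely many variables and a finite tuple $b_\alpha$ from $P$ (there is only a set of these), set $Q_0=P$, take directed colimits at limit stages, and at a successor stage let $Q_{\alpha+1}$ be a continuation of $Q_\alpha$ realising $\pi_\alpha(x,b_\alpha)$ whenever this type is realisable over $Q_\alpha$, and $Q_{\alpha+1}=Q_\alpha$ otherwise; then $\Phi(P):=Q_\lambda$. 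Every stage lies in $\C$, the successor continuations existing precisely because the relevant type is realisable over $Q_\alpha$.

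The key to the whole argument is a monotonicity property of realisability. Because positive formulas are preserved under homomorphisms, a realised partial positive type stays realised in every further continuation; dually, if $A\to B$ is a continuation and $\pi(x,c)$ (with $c\in A$) is realisable over $B$, then it is realisable over $A$, since any continuation of $B$ is also a continuation of $A$. Thus \emph{realisability can only be lost, never gained}, as one moves forward along a chain of continuations. Using this, I would first check that $\Phi(P)$ realises every type with parameters in $P$ that is realisable over $\Phi(P)$: if $\pi(x,b)$ is such a type, then by monotonicity it is still realisable over the intermediate structure $Q_\alpha$ at the stage $\alpha$ where it is treated in the construction, so it is actually realised there, and hence in $\Phi(P)$.

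Finally, to see that $M^+$ is existentially universal, take a partial positive type $\pi(x,b)$ with finitely many parameters $b$ from $M^+$, realisable in a continuation of $M^+$. As $b$ is a finite tuple it already lives in some $M_n$, and by monotonicity $\pi(x,b)$ is realisable over $M_n$; running the same argument inside the construction of $M_{n+1}=\Phi(M_n)$ shows that $\pi(x,b)$ is realised in $M_{n+1}$, hence in $M^+$. The step I expect to be the main obstacle, and the reason a naive one-type-at-a-time realisation is not obviously correct, is that inductive classes need not admit amalgamation, so realising one type may well destroy the realisability of another. What makes the construction go through is exactly the monotonicity observation: we never need to realise a type that has become unrealisable over the current structure, and any type that survives as realisable over the final colimit was realisable at every earlier stage, so it is caught when its turn comes.
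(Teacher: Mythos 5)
Your proof is correct and is essentially the paper's: the proof given there simply instructs one to repeat the standard chain construction of a positively existentially closed continuation from Ben Yaacov--Poizat, with positive formulas replaced by partial positive types, which is exactly the $\omega$-iterated transfinite chain you carry out. Your explicit isolation of the monotonicity of realisability under continuation is the right way to handle the absence of amalgamation, and is the (implicit) heart of that construction.
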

\begin{proof}
It suffices to reproduce the construction of a positively existentially closed continuation, replacing positive formulas by partial positive types in the proof of Fact \ref{INDPEC} found in \cite{BYP}.
\end{proof}

\begin{defi}
(i) If $T$ is an h-inductive theory, say that a model $M$ of $T$ is \emph{existential}, noted $M\moe T$, if it is a positively existentially universal structure of $\M(T)$. Any existential model is positive.\\
(ii) If $M$ is an $L$-structure and $f:M\to N$, we say that $f$ is an \emph{existential extension of $M$}, if $(N,f)$ is an existential model of $T(M|M)$ (or equivalently of $T_u(M|M)$).  
\end{defi}

\begin{rem}
(i) An $L$-structure $M$ is an existential model of $T$ if and only if it is an existential model of $T_u$, so $T$, $T_u$ and $T_k$ have the same existential models.\\
(ii) If $T$ has the "joint continuation property" (i.e. any two models have a common continuation into a third), the existential models of $T$ are the positively $\omega$-saturated positive models of $T$ introduced in \cite{BYP}.\\
(iii) For any $L$-structure $M$, an existential model $N$ of $T(M|M)$ is a positive extension which realises all finitary positive types over $Ma$, for every point $a$ from $N$.\\
(iv) By compactness, a universal domain $U$ of cardinal at least $|L|^+$ for a $\Pi$-theory $T$ in the positive fragment of positive formulas, as introduced in Definition 2.11 of \cite{PMTCAT}, is an existential model of $T$.
\end{rem}

We recall that by Lemma 18 of \cite{BYP}, any two $L$-structures $M,N$ with the same h-universal theory $U$ (without parameters), have the "joint continuation property", i.e. there exist two homomorphisms $f,g:M,N\to P$ into a common codomain which is a model of $U$. Frow this we get the following

\begin{lem}\label{CONTPOS}
Let $M,N\mo T$ and $a\in M_x$, $b\in N_x$ such that $tp^+_M(a)=tp^+_N(b)$. There exist $P\mo T$ and $f:M\to P$, $g:N\to P$, such that $f(a)=g(b)$.
\end{lem}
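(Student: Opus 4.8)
The plan is to reduce the statement to the joint continuation property (Lemma 18 of \cite{BYP}) recalled just above it, by passing to a language expansion in which the tuples $a$ and $b$ are named by a \emph{common} tuple of constants; the shared constants are precisely what will force the identification $f(a)=g(b)$.

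First I would introduce a finite tuple $c$ of new constant symbols with $|c|=|x|$, and form the $L(c)$-structures $(M,a)$ and $(N,b)$, where $c$ is interpreted as $a$ in $M$ and as $b$ in $N$. The heart of the argument is to show that $(M,a)$ and $(N,b)$ have the \emph{same} h-universal theory in the language $L(c)$. An h-universal $L(c)$-sentence has the form $\forall z\,(\psi(z)\imp\bot)$ with $\psi$ a positive $L(c)$-formula; writing $\psi(z)=\phi(c,z)$ for a positive $L$-formula $\phi(x,z)$ with $x\cap z=\emptyset$ (obtained by replacing each constant $c_i$ by the variable $x_i$), one checks that $(M,a)\mo\forall z\,(\phi(c,z)\imp\bot)$ holds if and only if $\exists z\,\phi(x,z)\notin tp^+_M(a)$. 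Since $\exists z\,\phi(x,z)$ is again a positive $L$-formula in the tuple $x$ alone, and $tp^+_M(a)=tp^+_N(b)$ by hypothesis, exactly the same h-universal $L(c)$-sentences hold in $(M,a)$ and in $(N,b)$. This translation between h-universal sentences over $L(c)$ and membership in the positive types is the key step, and the only place where the hypothesis is used.

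With this in hand, I would apply the joint continuation property in the language $L(c)$ to $(M,a)$ and $(N,b)$: this produces an $L(c)$-structure $P_0$ modelling their common h-universal theory $U^*$, together with $L(c)$-homomorphisms $f_0:(M,a)\to P_0$ and $g_0:(N,b)\to P_0$. Because $L(c)$-homomorphisms preserve the interpretation of the constants $c$, we automatically obtain $f_0(a)=c^{P_0}=g_0(b)$, which is the desired identification. It then remains only to upgrade $P_0$ to a model of $T$: since $M,N\mo T$ we have $(M,a),(N,b)\mo T_u$, so $T_u\subseteq U^*$ and hence the $L$-reduct of $P_0$ is a model of $T_u$. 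By the characterisation of models of $T_u$ (the Fact that an $L$-structure modelling $T_u$ admits an $L$-homomorphism into a model of $T$), there is an $L$-homomorphism $h:P_0\to P$ with $P\mo T$. Setting $f=h\circ f_0$ and $g=h\circ g_0$, regarded as $L$-homomorphisms $M\to P$ and $N\to P$, we get $f(a)=h(c^{P_0})=g(b)$, as required.

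I expect the only real obstacle to be the bookkeeping in the translation step: verifying that every positive $L(c)$-formula genuinely arises as $\phi(c,z)$ for a positive $L$-formula $\phi(x,z)$, and that the outer quantification over $z$ is correctly absorbed into the single positive $L$-formula $\exists z\,\phi(x,z)$ whose membership in the type is what controls validity of the h-universal sentence. Everything after that is a direct application of results already available in the excerpt.
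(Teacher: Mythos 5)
Your proposal is correct and follows essentially the same route as the paper: name the tuples by a common tuple of constants $c$, observe that $tp^+_M(a)=tp^+_N(b)$ translates into $(M,a)$ and $(N,b)$ having the same h-universal $L(c)$-theory, apply the joint continuation property in $L(c)$ so that preservation of the constants yields $f(a)=g(b)$, and finally continue into a model of $T$. Your explicit verification of the translation between h-universal $L(c)$-sentences and membership of $\exists z\,\phi(x,z)$ in the positive type is exactly the step the paper compresses into one sentence.
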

\begin{proof}
It is possible to restate the hypothesis as $T_u(M|a)=T_u(M|b)$, because the h-universal formulas satisfied by $a$ in $M$ and $b$ in $N$ are exactly the negations of the formulas in their positive type. If $c$ is an appropriate new tuple of constant symbols, the theory $T_u(M|a)(c/a)$ is complete in the language $L(M\cup c)$ so it has the joint continuation property : there exists a model $(P,d)\mo T_u(M|a)(c/a)$ and two $L(M\cup c)$-homomorphisms $f:(M,a)\to (P,d)$ and $g:(N,b)\to (P,d)$. As $T_u\subset T_u(M|a)$, we have $P\mo T_u$ and $f(a)=g(b)$ : continuing if necessary, we may suppose that $P$ is a model of $T$.
\end{proof}

\begin{thm}
If $M$ and $N$ are two existential models of an h-inductive theory $T$ and $T_u(M)=T_u(N)$, then $M$ and $N$ are infinitely equivalent. 
\end{thm}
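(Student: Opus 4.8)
The plan is to establish $L_{\oo\omega}$-equivalence (``infinite equivalence'') through Karp's back-and-forth characterisation, by exhibiting a nonempty back-and-forth system of finite partial isomorphisms between $M$ and $N$. The natural candidate is
\[
I=\{(a,b) : x\in V^*,\ a\in M_x,\ b\in N_x,\ tp^+_M(a)=tp^+_N(b)\},
\]
the set of finite tuples sharing a positive type. Once $I$ is shown to be a nonempty family of partial isomorphisms closed under one-point extensions on both sides, Karp's theorem gives $M\equiv_{\oo\omega}N$.

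The first task is to verify that each $(a,b)\in I$ really induces a partial isomorphism $a_i\mapsto b_i$, i.e.\ a map preserving all atomic relations \emph{and their negations} in both directions. This is exactly where I would use that $M$ and $N$ are positive (every existential model is positive) together with the resultant description of complements in Fact \ref{POSCAR}. For a positive atomic $\phi$, membership of $\phi$ in $tp^+_M(a)$ is equivalent to membership in $tp^+_N(b)$, which gives well-definedness, injectivity and preservation of atomic formulas; and $M\not\mo\phi(a)$ is witnessed by some $\psi\in Res_T(\phi)$ with $\psi\in tp^+_M(a)=tp^+_N(b)$, forcing $N\not\mo\phi(b)$, so negated atomics are preserved as well. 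Nonemptiness is then the observation that $T_u(M)=T_u(N)$ says precisely that $M$ and $N$ satisfy the same positive sentences (the negation of a positive sentence is an h-universal sentence, and conversely), so $tp^+_M(\,)=tp^+_N(\,)$ and $(\emptyset,\emptyset)\in I$.

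The heart of the argument is the extension step. Given $(a,b)\in I$ and a new element $a'\in M$, set $\pi(x,y)=tp^+_M(aa')$, a positive type in a single extra variable $y$. By Lemma \ref{CONTPOS} applied to $tp^+_M(a)=tp^+_N(b)$, I obtain $P\mo T$ with homomorphisms $f:M\to P$ and $g:N\to P$ satisfying $f(a)=g(b)$. Since $f$ preserves positive formulas, $f(a')$ realises the partial positive type $\pi(g(b),y)$ in the continuation $(P,g)$ of $N$; as $N$ is an existential model, $\pi(b,y)$ is therefore realised by some $b'\in N$, and maximality of positive types upgrades the resulting inclusion to $tp^+_N(bb')=tp^+_M(aa')$, so $(aa',bb')\in I$. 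The ``back'' direction is the mirror image, exploiting the symmetry of the hypothesis $T_u(M)=T_u(N)$ and the existentiality of $M$. Hence $I$ is a nonempty back-and-forth system of partial isomorphisms, and Karp's theorem concludes.

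I expect the main obstacle to be precisely this extension step, namely producing a continuation of $N$ in which $\pi(b,y)$ is realised: this is where Lemma \ref{CONTPOS} and the definition of an existential model have to be combined, the key point being that the common amalgam $P$ lets $f(a')$ act as a witness over $g(b)=f(a)$. A secondary subtlety worth flagging explicitly is that elements of $I$ must be genuine partial isomorphisms, not merely positive-type-preserving maps; controlling the negated atomic formulas via resultants is essential here, since $L_{\oo\omega}$-equivalence involves full negation and would fail for the one-directional preservation that positivity alone provides.
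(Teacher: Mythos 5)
Your proposal is correct and follows essentially the same route as the paper: the same back-and-forth system of pairs of tuples with equal positive types, the same use of Lemma \ref{CONTPOS} to amalgamate $M$ and $N$ over $a\mapsto b$ so that the image of the new element witnesses the complete positive type $p(b,y)$ in a continuation of $N$, existentiality of $N$ to realise it, and Karp's theorem to conclude. Your explicit check that the pairs are genuine partial isomorphisms (handling negated atomics via resultants and positivity of the models) is a detail the paper leaves implicit, but it does not change the argument.
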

\begin{proof}
We build a back-and-forth system of partial isomorphisms between $M$ and $N$ and use Karp's theorem (Corollary 3.5.3 of \cite{WH}). Let $S$ be the collection of all pairs $(a,b)$ of finite tuples with $a$ from $M$ and $b$ from $N$, with the same sorting, and such that $tp^+_M(a)=tp^+_N(b)$. By hypothesis, we have $(\emptyset,\emptyset)\in S$, because $M$ and $N$ satisfy the same h-universal, and hence the same positive sentences; thus, $S$ is not empty. Let $(a,b)\in S$ and $\alpha\in M_y$, for a variable $y$, and let $p(x,y)=tp^+_M(a,\alpha)$ : by definition of $S$ we have $tp^+_M(a)=tp^+_N(b)$, which means that $T_u(M|a)=T_u(N|b)$, and by the preceding lemma there exists a common continuation of $M$ and $N$ by $f$ and $g$ into a model $P$ of $T$, with $f(a)=g(b):=c$. The (complete) positive type $p(b,y)$ is realised by $f\alpha$ in $(P,f)$, so as $N$ is existential there exists a realisation $\beta\in N_y$ of $p(b,y)$ in $N$; as $p(b,y)$ is complete, we have $tp^+_M(a\alpha)=p(xy)=tp^+_N(b\beta)$, so $(a\alpha,b\beta)\in S$. By symmetry, if we choose $\beta\in N_y$ we may find $\alpha\in N_y$ such that $(a\alpha,b\beta)\in S$, which is thus a back-and-forth system, and $M\equiv_\oo N$ by Karp's theorem.
\end{proof}

\begin{rem}
(i) If $T$ is a complete h-universal theory and $M$ is an existential model of an h-inductive theory $T'\subset T$, then $T'_u\subset T_u$ and $M$ is an existential model of $T$.\\
(ii) By the theorem, if $M$ and $N$ have the same h-universal theory $T$ of which they are existential models, they are infinitely equivalent.
\end{rem}

\begin{cor}
Let $T$ be an h-inductive theory. If $M,N\moe T$, $a,b\in M_x,N_x$ and $tp^+_M(a)=tp^+_N(b)$, then 
for every formula $\phi(x)\in L_{\oo\omega}$, we have $M\mo \phi(a)$ if and only if $N\mo \phi(b)$.
\end{cor}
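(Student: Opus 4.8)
The plan is to reduce the corollary to the preceding theorem by reusing the back-and-forth system built in its proof and then invoking Karp's theorem in its full, tuple-level form. The theorem itself only records the conclusion $M\equiv_\oo N$, i.e.\ the case of the empty tuple, but its proof actually produces a back-and-forth system $S$ between $M$ and $N$ consisting of \emph{all} pairs $(a',b')$ of finite tuples (of equal sorting) with $tp^+_M(a')=tp^+_N(b')$. The whole point is that the pair $(a,b)$ we are given already lies in this system, so the general form of Karp's theorem will hand us the desired equivalence over these parameters.

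First I would check that the hypotheses of the preceding theorem are met. Both $M$ and $N$ are existential models of $T$ by assumption. For the condition $T_u(M)=T_u(N)$, note that a positive sentence is in particular a positive formula $\phi(x)$ (with no variable of $x$ actually occurring), so the hypothesis $tp^+_M(a)=tp^+_N(b)$ forces $M$ and $N$ to satisfy exactly the same positive sentences, hence the same h-universal sentences; thus $T_u(M)=T_u(N)$ as required. Consequently the proof of the theorem applies verbatim and yields the back-and-forth system $S=\{(a',b') : tp^+_M(a')=tp^+_N(b')\}$. By the very hypothesis of the corollary, $(a,b)\in S$.

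It remains to pass from membership in $S$ to agreement on all of $L_{\oo\omega}$. Here I would appeal to Karp's theorem in the form that governs back-and-forth systems (Corollary 3.5.3 of \cite{WH}) not merely at the empty tuple but at an arbitrary related pair: if $S$ is a back-and-forth system between $M$ and $N$ and $(a',b')\in S$, then $a'$ in $M$ and $b'$ in $N$ satisfy exactly the same formulas of $L_{\oo\omega}$. Applying this to $(a,b)\in S$ gives, for every $\phi(x)\in L_{\oo\omega}$, that $M\mo\phi(a)$ if and only if $N\mo\phi(b)$, which is the assertion.

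The only point requiring care—and hence the main obstacle—is that the theorem as stated extracts only $M\equiv_\oo N$ from $S$, whereas here I need the parametrised conclusion for the specific pair $(a,b)$; so I must invoke the standard statement of Karp's theorem at the level of tuples rather than quote the theorem as a black box. An equivalent route, should one prefer to cite the theorem directly, is to adjoin new constants $c$ (of sorting $|x|$) naming $a$ in $M$ and $b$ in $N$, and to apply the theorem to the $L(c)$-structures $(M,a)$ and $(N,b)$: these are still existential (a partial positive $L(c)$-type is just a partial positive $L$-type with $a$ among its parameters, and continuations of $(M,a)$ correspond bijectively to continuations of $M$, so existential universality transfers), and they share the same h-universal $L(c)$-theory precisely because $tp^+_M(a)=tp^+_N(b)$. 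The theorem then gives $(M,a)\equiv_\oo(N,b)$, and reading $\phi(x)\in L_{\oo\omega}$ as the $L(c)$-sentence $\phi(c)$ yields the equivalence.
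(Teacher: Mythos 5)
Your proposal is correct and matches the paper's argument: the paper's own proof is exactly your ``alternative route,'' adjoining constants $c$ so that $(M,a)$ and $(N,b)$ become existential models of $T$ in $L(c)$ with the same h-universal theory, and then applying the preceding theorem. Your primary route (keeping the pair $(a,b)$ inside the back-and-forth system $S$ and citing Karp's theorem in its tuple-level form) is only a cosmetic variant of the same idea, and your justification that existential universality and the equality of h-universal theories transfer to the expanded language is sound.
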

\begin{proof}
Let $c$ be a new tuple of constants with the same sorting as $a$ and $b$ : the $L(c)$-structures $(M,a)$ and $(N,b)$ are existential models of $T$ in this extended language, and the hypothesis says that $T_u(M|a)=T_u(N|b)$ (as before, because $a$ and $b$ satisfy the same positive sentences); by the preceding theorem we have $(M,a)\equiv_\oo (N,b)$. Now if $\phi(x)\in L_{\oo\omega}$, the formula $\phi(c/x)$ is in $L(c)_{\oo\omega}$, so $M\mo \phi(a)$ if and only if $M\mo \phi(b)$. 
\end{proof}

Let now $p(x)\in S_x(T)$ be a finitary positive type over $T$ : if $a$ and $b$ are two realisations of $p$ in existential models $M$ and $N$ of $T$, we have $tp^+_M(a)=tp^+_N(b)$ and by the corollary, for every formula $\phi(x)\in L_{\oo\omega}$ we have $M\mo \phi(a)$ if and only if $N\mo \phi(b)$, so the formulas of $L_{\oo\omega}$ satisfied by a realisation of $p$ in an existentiel model of $T$ do not vary with the realisation. We may thus define $[\phi(x)]=\{p\in S_x(T) : \forall M\moe T, \forall a\in M_x,a\mo_M p\imp M\mo \phi(a)\}$ for any such formula. Now if $\pi(x)$ is a geometric type such that $[\phi(x)]=[\pi(x)]$ by Proposition \ref{GEODEF}, for every existential model $M$ of $T$ and $a\in M_x$, we have $M\mo \phi(a)\iff tp^+_M(a)\in [\phi(x)] \iff M\mo \pi(a)$, which means we have the

\begin{cor}\label{INFGEO}
For every formula $\phi(x)\in L_{\oo\omega}$, there exists a geometric type $\pi_\phi(x)$ such that for every existential model $M$ of $T$ and every $a\in M_x$, one has $M\mo \phi(a)$ if and only if $M\mo \pi_\phi(a)$.
\end{cor}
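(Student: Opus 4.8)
The plan is to read off Corollary \ref{INFGEO} as a repackaging of the preceding invariance corollary together with Proposition \ref{GEODEF}. The decisive input, already secured by the corollary immediately above, is that the full $L_{\oo\omega}$-theory of a realisation of a fixed positive type $p\in S_x(T)$ in an existential model of $T$ depends on $p$ alone: if $a$ realises $p$ in an existential model $M$ and $b$ realises $p$ in an existential model $N$, then $tp^+_M(a)=p=tp^+_N(b)$, so that corollary gives $M\mo\phi(a)\iff N\mo\phi(b)$ for every $\phi(x)\in L_{\oo\omega}$. Consequently, for each such $\phi$ the subset
\[
[\phi(x)]=\{p\in S_x(T): M\mo\phi(a)\ \text{for some (equivalently any)}\ M\moe T\ \text{and}\ a\in M_x\ \text{realising}\ p\}
\]
of $S_x(T)$ is well-defined.

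First I would confirm that this set is genuinely independent of the chosen existential model and realisation, which is precisely the content of the preceding corollary once $a,b$ are allowed to range over all existential models realising $p$; such realisations exist because every $p$ is realised in some positive model, which then continues into an existential one where $p$ is still realised. This makes the truth of ``$M\mo\phi(a)$'' a function of $tp^+_M(a)$ only, i.e.\ $M\mo\phi(a)\iff tp^+_M(a)\in[\phi(x)]$ for every existential $M$ and every $a\in M_x$.

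Next I would feed the subset $[\phi(x)]\subseteq S_x(T)$ into Proposition \ref{GEODEF}, which produces a geometric type $\pi_\phi(x)$ with $[\pi_\phi(x)]=[\phi(x)]$, where $[\pi_\phi(x)]=\{p: \forall\theta\in\pi_\phi,\ p^*\mo\theta\}$. For an existential $M$ and $a\in M_x$ with positive type $p$, the associated geometric type $p^*$ is exactly the geometric type of $a$ in $M$, so $p\in[\pi_\phi(x)]$ holds iff $M\mo\theta(a)$ for every $\theta\in\pi_\phi$, that is iff $M\mo\pi_\phi(a)$. Chaining the equivalences then yields $M\mo\phi(a)\iff tp^+_M(a)\in[\phi(x)]=[\pi_\phi(x)]\iff M\mo\pi_\phi(a)$, which is the claim.

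The main obstacle is entirely upstream, in the infinitary equivalence theorem: one must know that two existential models with the same h-universal theory are $L_{\oo\omega}$-equivalent (via the back-and-forth system of Lemma \ref{CONTPOS} and Karp's criterion), so that the $L_{\oo\omega}$-type of a realisation is pinned down by its positive type. Granting that invariance, the corollary is bookkeeping: the only things to check are that $[\phi(x)]$ is well-defined and that the translation between ``$p\in[\pi_\phi(x)]$'' and ``$M\mo\pi_\phi(a)$'' is faithful, both of which are immediate from the definition of $p^*$ and of the bracket $[\,\cdot\,]$ on geometric types.
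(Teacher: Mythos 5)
Your proposal is correct and follows essentially the same route as the paper: invoke the preceding corollary to see that satisfaction of $\phi(x)\in L_{\oo\omega}$ at a realisation in an existential model depends only on the positive type, form the subset $[\phi(x)]\subseteq S_x(T)$, and apply Proposition \ref{GEODEF} to obtain the geometric type $\pi_\phi$, chaining $M\mo\phi(a)\iff tp^+_M(a)\in[\phi(x)]=[\pi_\phi(x)]\iff M\mo\pi_\phi(a)$. The additional care you take in checking that every positive type is realised in some existential model, and that $p^*$ is indeed the geometric type of any such realisation, is consistent with the paper's (more terse) presentation.
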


\begin{cor}\label{EXTINF}
If $M,N\moe T$ and $f:M\to N$, then $f$ is $\oo$-elementary.
\end{cor}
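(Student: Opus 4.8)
The plan is to reduce the statement to the preceding corollary on the invariance of $L_{\oo\omega}$-satisfaction under equality of positive types. Recall that asserting $f:M\to N$ is $\oo$-elementary means precisely that for every finite tuple $a\in M_x$ and every formula $\phi(x)\in L_{\oo\omega}$ one has $M\mo\phi(a)$ if and only if $N\mo\phi(fa)$; note that the tuples involved are finite, matching the finitely many free variables allowed in an $L_{\oo\omega}$-formula. So I would fix such a tuple $a\in M_x$ and invoke the corollary above (the one stating that if $M,N\moe T$ and $tp^+_M(a)=tp^+_N(b)$ then $M$ and $N$ agree on all $L_{\oo\omega}$-formulas) with $b:=fa$. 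It therefore suffices to establish the single equality $tp^+_M(a)=tp^+_N(fa)$.

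First I would check the easy inclusion $tp^+_M(a)\subseteq tp^+_N(fa)$: since $f$ is an $L$-homomorphism it preserves positive formulas with parameters, so any positive $\phi(x)$ with $M\mo\phi(a)$ satisfies $N\mo\phi(fa)$. For the reverse inclusion, the crucial observation is that an existential model is in particular positive, i.e.\ positively existentially closed in $\M(T)$, as recorded immediately after the definition of existential models. Since $M$ is positively existentially closed and $N\in\M(T)$, the homomorphism $f:M\to N$ is an immersion; by the definition of immersion, whenever $N\mo\phi(fa)$ for a positive $\phi(x)$ we have $M\mo\phi(a)$, which gives $tp^+_N(fa)\subseteq tp^+_M(a)$. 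Combining the two inclusions yields $tp^+_M(a)=tp^+_N(fa)$.

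With this equality in hand, the preceding corollary (which itself rests on the back-and-forth construction and Karp's theorem, through the $\oo$-equivalence theorem) immediately gives $M\mo\phi(a)$ if and only if $N\mo\phi(fa)$ for every $\phi(x)\in L_{\oo\omega}$, which is exactly the $\oo$-elementarity of $f$. I expect essentially no obstacle here: the entire analytic content has been front-loaded into the corollary, and the only thing to verify is that a homomorphism between two existential models induces an equality of positive types — which is immediate once one recalls that existential models are positively existentially closed, so that $f$ is automatically an immersion.
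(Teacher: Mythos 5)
Your proof is correct, and it takes a genuinely (if mildly) different route from the paper's. The paper proves the corollary by invoking Corollary \ref{INFGEO}: given $\phi(x)\in L_{\oo\omega}$, choose a geometric type $\pi_\phi(x)$ equivalent to $\phi$ in existential models of $T$; since the validity of geometric types is preserved under homomorphisms, $M\mo\phi(m)$ yields $N\mo\pi_\phi(fm)$ and hence $N\mo\phi(fm)$ (the converse direction being obtained by running the same argument on $\neg\phi$, as $L_{\oo\omega}$ is closed under negation). You instead go through the corollary on invariance of $L_{\oo\omega}$-satisfaction under equality of positive types, and supply the required equality $tp^+_M(a)=tp^+_N(fa)$ by noting that an existential model is positive, hence positively existentially closed in $\M(T)$, so that $f$ is an immersion; together with preservation of positive formulas under homomorphisms this gives both inclusions at once. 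Both arguments ultimately rest on the same theorem (infinitary equivalence of existential models with the same h-universal theory, via Karp), so neither is more economical in substance; your version handles the two directions of the biconditional simultaneously and makes explicit the immersion property of positive models, while the paper's version highlights geometric types as the homomorphism-invariant surrogates for arbitrary $L_{\oo\omega}$-formulas. No gap in either.
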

\begin{proof}
Let $\phi(x)\in L_{\oo\omega}$ and $m\in M_x$. By the preceding corollary we find a geometric type $\pi_\phi(x)$ equivalent to $\phi(x)$ in existential models of $T$. If $M\mo \phi(m)$, we thus have $M\mo \pi(m)$, whence $N\mo \pi(fm)$ (because the validity of geometric types is preserved under homomorphisms) and $N\mo \phi(fm)$ : $f$ is $\oo$-elementary.
\end{proof}

We step back to the classical setting. If $U$ is any finitary first order theory, let $U^G$ be its positive Morleyisation in the language $L^G$ (see \cite{BYP}). We notice that positive Morleyisation does not change the size of the language, i.e. $|L^G|=|L|=\kappa$.

\begin{cor}\label{INFEX}
For every formula $\phi(x)\in L_{\oo\omega}$, there exists a formula $\phi'(x)$ of $L_{2^{2^\kappa}\omega}$ such that $\phi'$ is equivalent to $\phi$ in every $\omega$-saturated $L$-structure.
\end{cor}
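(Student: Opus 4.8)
The plan is to transfer Corollary~\ref{INFGEO} across positive Morleyisation by applying it to the h-inductive theory $T=U^G$ in the language $L^G$, whose ambient cardinal is again $|L^G_{\omega\omega}|=|L^G|=\kappa$. The first and crucial step is to set up the dictionary between the two sides. Every model $M$ of $U$ carries a canonical expansion $M^G$ to $L^G$ interpreting each Morleyisation symbol $R_\chi$ by $\chi^M$, and $M\mapsto M^G$ identifies the models of $U$ with these total expansions of $U^G$. Under this correspondence, realising all finitary positive $L^G$-types over a finite parameter set amounts (since positive $L^G$-types over a finite set correspond, via the axioms tying $R_\chi$ to $\chi$, to complete first-order $L$-types over that set) to ordinary $\omega$-saturation. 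The fact I would record here, citing \cite{BYP}, is thus: the existential models of $U^G$ are exactly the canonical expansions of the $\omega$-saturated models of $U$; I will only need the direction that an $\omega$-saturated $M$ yields an existential $M^G\moe U^G$, so that Corollary~\ref{INFGEO} becomes applicable to it.

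Next I would translate the target formula into $L^G$. Putting $\phi\in L_{\oo\omega}$ into negation normal form and replacing each literal $\alpha$, $\neg\alpha$ by the atomic symbol $R_\alpha$, $R_{\neg\alpha}$, while keeping the (possibly infinite) conjunctions and disjunctions and the finitary quantifiers, yields a formula $\phi^G\in L^G_{\oo\omega}$ such that $M^G\mo\phi^G(a)\iff M\mo\phi(a)$ in every $L$-structure $M$, by the Morleyisation axioms. Applying Corollary~\ref{INFGEO} to $T=U^G$ and $\phi^G$ produces a geometric type $\pi_{\phi^G}(x)$ equivalent to $\phi^G$ in every existential model of $U^G$. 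Writing $\Theta=\bigwedge\pi_{\phi^G}$ and translating each $R_\chi$ back to $\chi$ gives an honest $L_{\oo\omega}$-formula $\phi'$; unwinding the dictionary, for every $\omega$-saturated $M$ and every $a\in M_x$ one has
$$M\mo\phi(a)\iff M^G\mo\phi^G(a)\iff M^G\mo\Theta(a)\iff M\mo\phi'(a),$$
the middle equivalence being Corollary~\ref{INFGEO} applied to the existential model $M^G$, and the outer two being the Morleyisation axioms in the total expansion $M^G$. This is precisely the asserted equivalence on $\omega$-saturated structures.

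It remains to control the complexity of $\phi'$, which is the purely combinatorial part. A normal geometric formula is a disjunction of at most $\kappa$ positive primitive formulas, and there are only $\kappa$ positive primitive $L^G$-formulas in the variables $x$, so there are at most $2^\kappa$ normal geometric formulas and hence $\pi_{\phi^G}$ has at most $2^\kappa$ members. Thus $\Theta$ is a conjunction of at most $2^\kappa$ formulas, each a disjunction of at most $\kappa$ positive primitive formulas; after back-translation each positive primitive $L^G$-formula becomes an ordinary existential $L$-formula, so $\phi'$ is a conjunction of size $\le 2^\kappa$ of disjunctions of size $\le\kappa$ of finitary $L$-formulas, placing it in $L_{(2^\kappa)^+\omega}\subseteq L_{2^{2^\kappa}\omega}$.

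The main obstacle I anticipate is the first step: pinning down that the canonical expansions of the $\omega$-saturated models of $U$ are exactly the existential models of $U^G$, that is, matching positive $\omega$-saturation in $\M(U^G)$ with ordinary $\omega$-saturation in $\M(U)$ through the realisation of finitary positive $L^G$-types. Everything else is either the structural translation $\phi\mapsto\phi^G\mapsto\phi'$, which is routine induction on formulas, or the cardinal arithmetic of the previous paragraph; in particular the stated bound $L_{2^{2^\kappa}\omega}$ is not tight and is used only through $(2^\kappa)^+\le 2^{2^\kappa}$.
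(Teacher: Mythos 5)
Your proposal is correct and follows essentially the same route as the paper: apply Corollary~\ref{INFGEO} to the positive Morleyisation $U^G$ of the empty theory, use that the canonical expansion $M^G$ of an $\omega$-saturated $M$ is an existential model of $U^G$, translate the resulting geometric type back into $L$, and count. The only cosmetic differences are your intermediate negation-normal-form translation $\phi\mapsto\phi^G$ (the paper simply reads $\phi\in L_{\oo\omega}\subseteq L^G_{\oo\omega}$ directly) and your sharper bound $(2^\kappa)^+$ on the size of the conjunction, which the paper also implicitly uses via the bound $2^\kappa<2^{2^\kappa}$ on the number of normal geometric formulas.
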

\begin{proof}
We note $\lambda=2^{2^\kappa}$. Let $U=\emptyset$. Every formula $\phi(x)$ of $L^G_{\oo\omega}$ is logically equivalent in existential models of $U^G$ to a normal geometric type $\pi(x)$. There are at most $\lambda$ non equivalent such types, each of one having stricltly less than $\lambda$ elements, so $\phi(x)$ is logically equivalent, in existential models of $U^G$, to a formula of $L^G_{\lambda\omega}$, the conjunction of the type $\pi(x)$. In particular, this is true for every formula $\phi(x)$ of $L_{\oo\omega}$, and every geometric formula $\psi=\bigvee \Phi_\psi$ in $\pi_\phi$ is logically equivalent modulo $U^G$ to a formula $\psi'=\bigvee \{\theta':\theta\in \Phi_\psi\}$, where $\theta'\in L$ for each $\theta\in \Phi_\psi$. 
Now if $M$ is any $\omega$-saturated $L$-structure, $M^G$ is an existential model of $U^G$, hence $\phi$ is equivalent in $M$ to $\bigwedge_{\psi\in \pi}\bigvee_{\theta\in \Phi_\psi}  \theta'$, a formula of $L_{\lambda\omega}$.
\end{proof}

This corollay means that we may define the "infinitary type" of a finite tuple as a \emph{set} of formulas, if we allow ourselves to work in $\omega$-saturated structures or more generally in existential models in posiive model theory, because we may assign a bound to the complexity of the formulas we need.\\
These considerations about other kinds of logical operations in the context of positive model theory were originally motivated by an attempt to introduce some elements of the classical setting, in order to take care of other formulas than positive ones.
The "constructible" formulas introduced in section \ref{SPECTOP} are interpreted as spectral open subsets of positive type spaces. As the Stone space of ultrafilters of this Boolean algebra of subsets of $S_x(T)$ is compact for the usual topology, a most natural question is the "semantic" meaning of this compactness. Now the natural order on this Boolean algebra is given by $[\phi(x)]\leq [\psi(x)]\iff$ for every positive model $M$ of $T$, $M\mo \forall x\ \phi(x)\imp\psi(x)$. This last sentence being h-inductive, this is equivalent to saying that $"\forall x\ \phi(x)\imp\psi(x)"\in T_k$, the Kaiser hull of $T$. In other words, the Boolean algebra of constructible subsets of $S_x(T)$ reflects the logic of models of $T_k$ and not of positive models of $T$, and the ultrafilters are the "constructible types" of tuples in models of $T_k$. As such they generalise the positive types, but as the satisfaction of constructible sentences is not stable under homomorphisms in general, there is little hope for this to be useful in positive model theory, at least in general.
\\Another Boolean algebra of possible interest would be the algebra of regular spectrally (or alternatively definably) open subsets, with its compact space of ultrafilters, the semantic meaning of which is much less clear, in particular because we do not know at present if positive types induce ultrafilters of regular opens (in both cases). We leave it as a distinct problem to be adressed in further research.

\section{Geometric Morleyisation}\label{GEOMOR}
The existence of positively existentially closed and existentially universal structures in a full subcategory $\C$ of $\LS$ is secured whenever $\C$ is closed under directed colimits. This would allow one to expand the basic considerations of positive model theory to the more general "geometric theories", which we will call here "g-inductive", in order to keep with the habit of not distinguishing between formulas and axioms in set-theoretic model theory. We will say that a formula $\phi(x)$ in $L_{\oo\omega}$ is \emph{g-inductive}, if it has the form $\forall y\ \psi(x,y)\imp \chi(x,y)$, where $\psi$ and $\chi$ are geometric; we may always suppose that $\psi$ and $\chi$ are normal, hence any class of g-inductive sentences is equivalent to a set of g-inductive sentences from $L_{2^\kappa\omega}$.
Without loss of generality, we then define a \emph{g-inductive theory} as a set of g-inductive sentences and one easily checks the following

\begin{lem}\label{INDGEO}
The full subcategory $\M(T)$ of models of a g-inductive theory $T$ is closed under directed colimits.
\end{lem}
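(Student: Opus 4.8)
The plan is to show that directed colimits of models of a g-inductive theory $T$ are again models, by checking that each g-inductive axiom is preserved when passing to the colimit. Let $(A_i,f_{ij})_{i\in I}$ be a directed system of models of $T$ with colimit $A=\ind A_i$, together with the canonical homomorphisms $g_i:A_i\to A$. The whole point is that a g-inductive sentence has the shape $\forall y\ (\psi(x,y)\imp\chi(x,y))$ with $\psi,\chi$ geometric (and, by the normalisation remark preceding the lemma, we may take them to be disjunctions of positive primitive formulas), so preservation will follow from two basic facts about geometric formulas and directed colimits: that the validity of a geometric formula at a point is \emph{preserved} by homomorphisms, and, more delicately in the colimit, that it \emph{reflects} down to some stage of the system.

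First I would record the preservation direction, which is immediate: since each $g_i$ is an $L$-homomorphism and geometric formulas are preserved under homomorphisms, if $A_i\mo\chi(a,b)$ then $A\mo\chi(g_ia,g_ib)$. The substance is the reflection direction. Fix an axiom $\forall y\ (\psi\imp\chi)$ of $T$ and a tuple $g$ of elements of $A$; I want to verify $A\mo\psi(g)\imp\chi(g)$. Suppose $A\mo\psi(g)$. Because the indexing system is directed and tuples are \emph{finite}, every element of $g$ is in the image of some $g_i$, and by directedness I may choose a single stage $i$ and a tuple $a\in A_i$ with $g_i(a)=g$. The key step is then to argue that after possibly replacing $i$ by a larger index, one has $A_i\mo\psi(a)$ already at that stage: this is precisely the statement that satisfaction of a geometric formula in a directed colimit is reflected to some stage. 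Here I would use the normal form $\psi=\bigvee_k\psi_k$ with each $\psi_k$ positive primitive, i.e. of the shape $\exists z\ \theta(y,z)$ with $\theta$ a finite conjunction of atomics. From $A\mo\psi(g)$ I get some disjunct $\psi_k$ and witnesses for its existential quantifiers in $A$; since these witnesses are again finitely many elements of the colimit, directedness lets me pull them, together with $a$, back to a common stage $j\ge i$ at which the finite conjunction of atomic formulas $\theta$ holds, so that $A_j\mo\psi_k(f_{ij}a)$ and hence $A_j\mo\psi(f_{ij}a)$.

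Once $\psi$ holds at the stage $A_j$, the axiom $\forall y\ (\psi\imp\chi)\in T$ and $A_j\mo T$ give $A_j\mo\chi(f_{ij}a)$, and applying $g_j$ and the preservation direction yields $A\mo\chi(g_j f_{ij}a)=\chi(g_i a)=\chi(g)$, which is what was required. Since the axiom and the tuple $g$ were arbitrary, $A$ satisfies every g-inductive sentence of $T$, so $A\mo T$ and $\M(T)$ is closed under directed colimits.

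The main obstacle, and the only place where the finitary quantifier restriction built into the definition of geometric formulas is genuinely used, is the reflection lemma for geometric formulas across a directed colimit: that $A\mo\psi(g)$ forces $\psi$ to hold at some finite stage. The argument above handles it by induction on the geometric formula $\psi$ — finite conjunctions and single existential quantifiers cooperate with directedness precisely because they introduce only finitely many witnesses at a time, while arbitrary disjunctions cause no trouble since satisfaction of a disjunction pins down a single disjunct. It is exactly the exclusion of negation, implication, and universal (or infinitary) quantification from $L^g_{\oo\omega}$ that makes this reflection work, so I would present the reflection step as the heart of the proof and treat the surrounding colimit bookkeeping as routine.
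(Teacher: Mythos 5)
Your proof is correct and is precisely the standard argument the paper has in mind when it states the lemma with the remark that ``one easily checks'' it (no proof is given in the text): reduce to normal form, use preservation of geometric formulas under the colimit maps, and use the reflection of a geometric formula to a finite stage, which works because only one disjunct and finitely many witnesses are ever involved. Nothing is missing; your identification of the reflection step as the point where the finitary quantifier restriction on $L^g_{\oo\omega}$ is used is exactly right.
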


One could try and reproduce the concepts arising in positive model theory, as positive models, positive types... for such g-inductive theories. However, the lack of compactness for infinitary logic would throw out every result which essentially needs it in this context, so this would seem to be a different kind of study. Nevertheless, it is possible to axiomatise the positive models of an h-inductive theory by a g-inductive one, hence positive model theory is in some sense a particular case of geometric logic, in which some of the first order compactness is "retained".\\
In this short and last section we will rather study the interplay between infinitary logic and g-inductive theories, as sketched in the precedent section with Corollary \ref{INFEX} which simplifies the formulas of $L_{\oo\omega}$ in $\omega$-saturated $L$-structures; this might have been proved directly, but we have used positive Morleyisation, showing that this is essentially a positive model theoretic property.
It is known that infinitary theories may be translated in a very general way into "basic ones" (see \cite{MP}, Proposition 3.2.8); however, we have not read about the reduction of $L_{\oo\omega}$ to geometric logic by this kind of Morleyisation, so we adress this question here, restricting ourselves to a theory $T$ in $L_{\infty\omega}$, for a finitary first order language $L$. As $T$ is a \emph{set} there exists a 
cardinal $\lambda$ such that $T\subset L_{\lambda\omega}$ and we may adapt positive Morleyisation to this fragment in the following simple way.

\begin{defi}
For every formula $\phi(x)$ of $L_{\lambda\omega}$, we introduce a new relational symbol of the same arity noted $R_\phi(x)$, and we get an extended first order language $L^G$ with the same sorts. We then define the theory $T^G$ in $L^G$, which contains the following sentences of $L^G$ :\\
(i) If $\phi(x)$ is atomic in $L$, two g-inductive sentences expressing $\forall x\ \phi(x)\iff R_\phi(x)$ \\
(ii) If $\Phi(x)$ is a $\lambda$-small set of formulas of $L_{\lambda\omega}$, two g-inductive sentences expressing $\forall x\  R_{\bigvee \Phi}(x) \iff \bigvee \{ R_\phi:\phi\in \Phi\}(x) $\\
(iii) If $\phi(x)$ is in $L_{\lambda\omega}$, the sentences $\forall x\ (R_\phi\wedge R_{\neg\phi})(x)\imp \bot$ and $\forall x\ \top \imp (R_{\phi}\vee R_{\neg\phi})(x))$, expressing $\forall x\ \neg R_\phi(x)\iff R_{\neg\phi}(x)$\\
(iv) If $\Phi(x)$ is a $\lambda$-small set of formulas of $L_{\lambda\omega}$, the sentences $\forall x\ (R_{\bigwedge\Phi}\wedge \bigvee\{R_{\neg\psi}:\psi\in \Phi\})(x)\imp \bot$ and $\forall x\ \top\imp (R_{\bigwedge\Phi}\vee \bigvee\{R_{\neg\psi} : \psi\in \Phi\})(x)$, expressing $\forall x\ R_{\bigwedge\Phi}(x)\iff \neg\bigvee \{R_{\neg\psi} : \psi\in \Phi\}(x)$\\
(v) If $\phi(x,y)$ is in $L_{\lambda\omega}$, two sentences expressing $\forall x\ R_{\exists y\phi}(x)\iff \exists y R_\phi(x,y)$\\
(vi) If $\phi$ is a sentence in $T$, the propositional constant $R_\phi$.\\
The theory $T^G$ is g-inductive, and we call it the \emph{geometric Morleyisation of $T$}.
\end{defi}

\begin{prop}
(i) Every model $M$ of $T$ has a canonical expansion $M^G$ which is a model of $T^G$, and the $L$-reduct of every model of $T^G$ is a model of $T$\\
(ii) The categories $\M_{\lambda\omega}(T)$ (of $\lambda$-elementary extensions of models of $T$) and $\M(T^G)$ (of homomorphisms of models of $T^G$) are isomorphic.
\end{prop}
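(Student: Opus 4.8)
The plan is to reduce both parts to a single induction showing that the new predicates $R_\phi$ are forced, inside any model of $T^G$, to capture the formulas $\phi$ they name. For a model $M\mo T$ I first define the \emph{canonical expansion} $M^G$ by declaring each new symbol to be interpreted as $R_\phi(x)^{M^G}=\phi(x)^M$, keeping the $L$-reduct equal to $M$. The single fact driving everything is the following \textbf{Key Lemma:} for every model $N\mo T^G$ and every formula $\phi(x)\in L_{\lambda\omega}$ one has $R_\phi(x)^N=\phi(x)^{N{\restriction}L}$, where $N{\restriction}L$ denotes the $L$-reduct of $N$. In particular this will say that every model of $T^G$ \emph{is} the canonical expansion of its reduct.

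I would prove the Key Lemma by induction on the construction of $\phi$ in $L_{\lambda\omega}$, each step being licensed by the matching clause in the definition of $T^G$: the atomic base is clause (i), using that atomic formulas are interpreted identically in $N$ and in $N{\restriction}L$; $\lambda$-small disjunctions use clause (ii); negations use clause (iii); $\lambda$-small conjunctions use clause (iv), rewriting $R_{\bigwedge\Phi}$ through the dual disjunction of the $R_{\neg\psi}$ and combining clause (iii) with the induction hypothesis; and existential quantification uses clause (v), universal quantification being treated as the abbreviation $\forall y\,\psi:=\neg\exists y\,\neg\psi$ already covered by the negation and existential cases.

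Granting the lemma, part (i) follows in both directions. For $M\mo T$ one checks directly that $M^G$ satisfies clauses (i)--(v), because the operation $\phi\mapsto\phi(x)^M$ commutes with the Boolean and existential operations on formulas; clause (vi) holds because $M\mo\phi$ for every $\phi\in T$, so the $\emptyset$-ary predicate $R_\phi$ is true in $M^G$. Conversely, if $N\mo T^G$ then the Key Lemma together with clause (vi) gives $N{\restriction}L\mo\phi$ for each $\phi\in T$, hence $N{\restriction}L\mo T$; and since the lemma says precisely that $N=(N{\restriction}L)^G$, the expansion $M\mapsto M^G$ and the reduct $N\mapsto N{\restriction}L$ are mutually inverse bijections between the objects of $\M_{\lambda\omega}(T)$ and those of $\M(T^G)$. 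For part (ii) it then suffices to match morphisms under this object bijection. An $L^G$-homomorphism $f\colon M^G\to N^G$ is exactly a map preserving every atomic $L^G$-formula, in particular each predicate $R_\phi$; via the identification $R_\phi(x)^{M^G}=\phi(x)^M$ this reads $M\mo\phi(a)\imp N\mo\phi(fa)$ for all $\phi\in L_{\lambda\omega}$. As $L_{\lambda\omega}$ is closed under negation, applying this to $\neg\phi$ yields the reverse implication, so $f$ preserves and reflects all $L_{\lambda\omega}$-formulas, i.e. $f$ is $\lambda$-elementary $M\to N$; conversely any $\lambda$-elementary map preserves every $R_\phi$ and so is an $L^G$-homomorphism $M^G\to N^G$. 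Thus the identity on underlying maps is a bijection of morphisms, compatible with composition and identities, and with the object bijection it gives the isomorphism $\M_{\lambda\omega}(T)\cong\M(T^G)$.

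I expect the Key Lemma to be the only genuine obstacle. One must be careful that the clauses of $T^G$ pin down $R_\phi$ uniformly for \emph{arbitrary} $\phi\in L_{\lambda\omega}$, not merely for a generating fragment, and in particular that the two g-inductive sentences encoding each classical equivalence — the disjointness half and the covering half in clauses (iii) and (iv) — really force the Boolean identity $\neg R_\phi\iff R_{\neg\phi}$ inside a model, rather than just one inclusion. Once this negation and conjunction bookkeeping is correctly arranged, the remaining verifications in parts (i) and (ii) are routine.
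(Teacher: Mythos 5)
Your proposal is correct and follows essentially the same route as the paper: the same canonical expansion $M^G$ with $R_\phi^{M^G}=\phi^M$, the same induction on the complexity of $\phi$ establishing $R_\phi^N=\phi^{N|L}$ for any $N\mo T^G$ (your Key Lemma), and the same identification of $L^G$-homomorphisms with $\lambda$-elementary maps via preservation of the $R_\phi$ and closure of $L_{\lambda\omega}$ under negation. The only divergence is that you treat $\forall y\,\psi$ as the abbreviation $\neg\exists y\,\neg\psi$, whereas the paper runs a separate (and somewhat laboured) induction step for universal quantification even though $T^G$ contains no clause constraining $R_{\forall y\psi}$ directly; your convention is the cleaner way to make that case go through.
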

\begin{proof}
(i) Let $M$ be a model of $T$. For every formula $\phi(x)\in L_{\lambda\omega}$, we interpret $R_\phi(x)$ in $M$ as $\phi(x)^M$ : call this expansion $M^G$. By definition of $T^G$ one checks that $M^G$ is a model of the axioms introduced in clauses (i)-(v) of the definition of $T^G$. As for clause (vi), if $\phi\in T$ is a sentence, the propositional constant $R_\phi$ is in $L^G$, and is interpreted in $M^G$ as $\phi^M$, which is $M^\emptyset=\{\emptyset\}$, i.e. $M^G\mo R_\phi$, so $M^G\mo T^G$.\\
Conversely, if $M\mo T^G$, we must prove that $M|L\mo T$, and for this we show that $M\mo R_\phi(a)$ if and only if $M|L\mo \phi(a)$, for every sentence with parameters $\phi(a)$ with $\phi(x)\in L_{\lambda\omega}$ and $a\in M_x$, by induction on the complexity of $\phi$ :\\
- If $\phi$ is atomic, by definition of $T^G$ we have $M\mo \forall x\ R_\phi(x) \iff \phi(x)$ (clause (i) of the definition of $T^G$); we then have $M|L\mo \phi(a)\iff M\mo \phi(a) \iff M\mo R_\phi(a)$\\
- If $\phi=\bigvee \Phi$, where $\Phi$ is a set of cardinality $|\Phi|<\lambda$ of formulas of $L_{\lambda\omega}$, we have $M|L\mo \phi(a)\iff \exists \psi\in \Phi,M|L\mo \psi(a) \iff$ (by induction hypothesis) $\exists \psi\in \Phi,M\mo R_\psi(a)\iff$ (by clause (ii) of the definition of $T^G$) $M\mo R_\phi(a)$\\
- If $\phi=\neg\psi$, by induction hypothesis and clause (iii) of the definition of $T^G$, we have $M|L\mo \phi(a) \iff M|L\not\mo \psi(a) \iff M\not\mo R_\psi(a)\iff M\mo R_\phi(a)$\\
- If $\phi=\bigwedge\Phi$, we have $M|L\mo \phi(a)\iff \forall \psi\in \Phi, M|L\mo \psi(a)\iff$ (by induction hypothesis) $\forall \psi\in \Phi, M\mo R_\psi(a)\iff $ (by the $\neg$ clause) $\not\exists \psi\in \Phi, M\mo R_{\neg\psi}(a)\iff $ (by the $\bigwedge$ clause (iv)) $M\mo R_{\phi}(a)$.\\
- If $\phi=\exists y\psi$, we have $M|L\mo \phi(a)\iff \exists b\in M_y,M|L\mo\psi(a,b)\iff$ (by induction hypothesis) $\exists b\in M_y,M\mo R_\psi(a,b)\iff M\mo \exists y R_\psi(a)\iff$ (by the $\exists$ clause) $M\mo R_\phi(a)$\\
- If $\phi=\forall y\psi$, we have $M|L\mo \phi(a)\iff \forall b\in M_y,M|L\mo \psi(a,b)\iff$ (by induction hypothesis) $\forall b\in M_y, M\mo R_\psi(a,b)\iff \not\exists b\in M_y, M\mo\neg R_\psi(a,b)\iff $ (by the $\neg$ clause) $\not\exists b\in M_y, M\mo R_{\neg\psi}(a,b)\iff M\not\mo \exists y R_{\neg\psi}(a,y)\iff$ (by definition of $R_{\neg\psi}(x,y)^M$) $M\not\mo \exists y \neg\psi(a,y)\iff M\mo \phi(a)$.\\
By induction on the complexity of $\phi$, we have $M|L\mo \phi(a)$ if and only if $M\mo R_\phi(a)$. Now if $\phi\in T$ is a sentence, we have $R_\phi\in T^G$, so $M\mo R_\phi$ and by what precedes we have $M|L\mo \phi$, so $M|L\mo T$.\\
(ii) If $f:M\to N$ is a $\lambda$-elementary embedding of models of $T$, it is a sorted map of $L^G$-structures between $M^G$ and $N^G$, because $L^G$ has the same sorts as $L$. Suppose that $M^G\mo \phi(a)$, where $\phi(x)$ is an atomic formula. If $\phi$ is an $L$-formula, we have $N\mo \phi(fa)$ by hypothesis on $f$, and if not, then $\phi(x)$ has the form $R(t_i(x):i<m)$, with the $t_i$'s $L$-terms and $R\in L^G-L$ : there exists an $L_{\lambda\omega}$-formula $\phi'(x)$ such that $R$ is equivalent to $R_{\phi'}$ modulo $T^G$, hence we have $M^G\mo \phi'(t_i(a):i<m)$, so $N^G\mo \phi'(t_i(fa):i<m)$, i.e. $N^G\mo \phi(fa)$, because $M^G,N^G\mo T^G$ by part (i) of the proposition. This shows that $G:(f:M\to N)\mapsto (f:M^G\to N^G)$ defines a functor from the category $\M(T)$ of models of $T$ with $\lambda$-elementary embeddings to the category $\M(T^G)$ of models of $T^G$ with $L^G$-homomorphisms.\\
In the other way round if $f:M\to N$ is an $L^G$-homomorphism between models of $T^G$, define $F(f:M\to
N)= (f:M|L\to N|L)$ : by the first part of the proposition, $M|L$ and $N|L$ are models of $T$, and if $M|L\mo \phi(a)$ with $\phi(x)\in L_{\lambda\omega}$, by the first part of the proof we have $M\mo R_{\phi}(a)$, so $N\mo R_{\phi}(fa)$ because $f$ is an $L^G$-homomorphism, and $N|L\mo \phi(fa)$ : $f$ is a $\lambda$-elementary $L$-embedding. This is now clear that $G$ and $F$ are inverse iosmorphisms of categories.
\end{proof}

It is well known that geometric (i.e. g-inductive) theories have classifying topoi, therefore the categories of set-theoretic models of g-inductive theories are essentially the categories of points of such topoi.
By geometric Morleyisation, the preceding theorem shows that the category of models of \emph{any} theory in $L_{\oo\omega}$ (with suitable morphisms, or working within a fixed fragment), is essentially the category of points of a such a topos. 

\begin{appendix}
\section{Existential Forcing}
In \cite{FAD}, Chapter 3, it is explained how the class of generic structures for Robinson's infinite forcing in an inductive class is a subclass of the class of existentially universal structures. Here we have not generalised the theory of infinite forcing in the more general positive context, but we have generalised in section \ref{EXINF} the notion of existentially universal model of \cite{FAD}, Chapter 1; this was natural in terms of an intepretation of infinitary logic in positive type spaces. In this appendix we interpret this as a forcing in models of $T_u$, for which the existential models are the generic ones. 

\begin{defi}
(i) Let $A\mo T_u$, $\phi(x)\in L_{\oo\omega}$ any formula and $a\in A_x$. We will say that \emph{$A$ (existentially) forces $\phi(a)$}, written $A\fo^e\phi(a)$, if for every positive type $p(x)\in S_x(T)$ such that $tp^+(a)\subset p$, one has $p\in [\phi(x)]$.\\
(ii) We will say that a model $M$ of $T_u$ is \emph{existentially generic}, if for every formula $\phi(x)\in L_{\oo\omega}$ and $a\in M_x$, one has
$$ M\mo\phi(a)\iff M\fo^e\phi(a).$$
\end{defi}

\begin{rem}
(i) Existential forcing is stable under continuations : if $A\mo T_u$ and $A\fo^e\phi(a)$, and if $f:A\to B\mo T_u$, then one has $B\fo^e\phi(fa)$.\\
(ii) If $M\mop T$, one has $M\fo^e\phi(a)$ if and only if $tp^+(a)\in [\phi(x)]$.\\
(iii) Contrary to classical finite and infinite forcing, existential forcing allows the consideration of infinitary formulas, thanks to the definition using the type spaces.
\end{rem}

\begin{lem}\label{PECTE}
If $M\mop T$, then for every formula $\phi(x)\in L_{\oo\omega}$ and $a\in M_x$, one has $M\fo^e\phi(a)$ or $M\fo^e\neg\phi(a)$.
\end{lem}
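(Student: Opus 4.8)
The plan is to transfer the forcing dichotomy to the positive type $p=tp^+(a)$ and then resolve it inside a single existential model by ordinary excluded middle. The starting observation is Remark (ii): since $M\mop T$, the type $p=tp^+(a)$ is a genuine element of $S_x(T)$, and one has $M\fo^e\phi(a)\iff p\in[\phi(x)]$, and likewise $M\fo^e\neg\phi(a)\iff p\in[\neg\phi(x)]$. The lemma therefore reduces to the purely type-theoretic assertion that for every $p\in S_x(T)$ and every $\phi(x)\in L_{\oo\omega}$ one has $p\in[\phi(x)]$ or $p\in[\neg\phi(x)]$.

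To prove this, I would first realise $p$ in an existential model. Since the elements of $S_x(T)$ are positive types of tuples in positive models of $T$, the type $p$ is realised in some positive model; continuing that model into an existential model $N\moe T$ and following the realisation along the homomorphism produces $b\in N_x$ with $tp^+_N(b)=p$, the positive type being preserved because it can only grow under a homomorphism while $p$ is already maximal. The crucial input is the corollary of Section \ref{EXINF} establishing that, for existential models, $N\mo\phi(b)$ depends only on the positive type $tp^+_N(b)$ and not on the chosen model or realisation. In view of the definition $[\phi(x)]=\{q\in S_x(T):\forall N'\moe T,\ \forall b',\ b'\mo_{N'} q\imp N'\mo\phi(b')\}$, this independence means precisely that $p\in[\phi(x)]\iff N\mo\phi(b)$, and symmetrically $p\in[\neg\phi(x)]\iff N\mo\neg\phi(b)$.

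It then suffices to apply the law of excluded middle within the fixed structure $N$: either $N\mo\phi(b)$ or $N\mo\neg\phi(b)$. By the two equivalences just established this gives $p\in[\phi(x)]$ or $p\in[\neg\phi(x)]$, and unwinding Remark (ii) yields $M\fo^e\phi(a)$ or $M\fo^e\neg\phi(a)$. I do not anticipate a genuine obstacle: the single delicate point is the legitimacy of reading membership in $[\phi(x)]$ off satisfaction in one existential realisation rather than in all of them, and this is exactly the content of the independence corollary; the remainder is bookkeeping about maximal positive types and the classical semantics of $L_{\oo\omega}$ inside one model.
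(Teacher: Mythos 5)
Your argument is correct and is essentially the paper's own proof: the paper also reduces to Remark (ii) and then invokes the identity $S_x(T)-[\phi(x)]=[\neg\phi(x)]$, which is exactly the fact you establish by realising $p$ in one existential model, applying the independence corollary, and using excluded middle there. You have merely made explicit the justification that the paper leaves implicit in that set-theoretic identity.
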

\begin{proof}
By (ii) of the preceding remark, if $M\not\fo^e\phi(a)$ we have $tp^+(a)\notin [\phi(x)]$, hence $tp^+(a)\in S_x(T)-[\phi(x)]=[\neg\phi(x)]$, 
i.e. $M\fo^e\neg\phi(a)$.
\end{proof}

This property is the usual notion of "genericity" for model theoretic forcing; however, genericity is construed for satisfaction to be equivalent to forcing, and because of the possible use of infinite conjunctions in building the formulas of $L_{\oo\omega}$ this will only be the case in existential models, as shows the following 

\begin{thm}
The existential models of $T_u$ are the existentially 
generic models.
\end{thm}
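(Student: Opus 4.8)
The plan is to prove the two inclusions separately: every existential model of $T_u$ is existentially generic, and conversely.

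For the forward inclusion, suppose $M\moe T$. Then $M\mop T$, so $tp^+_M(a)$ is a complete positive type for every finite $a\in M_x$, and by Remark (ii) following the definition of existential forcing we have $M\fo^e\phi(a)\iff tp^+_M(a)\in[\phi(x)]$ for every $\phi(x)\in L_{\oo\omega}$. On the other hand, the corollary stating that two realisations of the same positive type in existential models satisfy the same formulas of $L_{\oo\omega}$ shows that, for $p=tp^+_M(a)$, membership $p\in[\phi(x)]$ is equivalent to $M\mo\phi(a)$ (apply it to $M$ and $a$ themselves, since $M$ is existential and $a\mo p$). Combining the two equivalences gives $M\fo^e\phi(a)\iff M\mo\phi(a)$, so $M$ is existentially generic.

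For the converse, let $M\mo T_u$ be existentially generic. I first show $M\mop T$ by proving that $tp^+_M(a)$ is maximal for every $a$. Suppose not: then there is a positive $\phi_0(x)$ with $\phi_0\notin tp^+_M(a)$ and $Res_T(\phi_0)\cap tp^+_M(a)=\emptyset$. Consider $\chi(x)=\neg\phi_0(x)\wedge\bigwedge\{\neg\psi(x):\psi\in Res_T(\phi_0)\}$, a formula of $L_{\oo\omega}$. By the choice of $\phi_0$ we have $M\mo\chi(a)$, so genericity yields $M\fo^e\chi(a)$. But $[\chi(x)]=\emptyset$: for any complete type $p$, either $\phi_0\in p$ or, using the description $S_x(T)-[\phi_0]=\bigcup\{[\psi]:\psi\in Res_T(\phi_0)\}$ from the proof of Proposition \ref{TOPSPEC}, some $\psi\in Res_T(\phi_0)$ lies in $p$; in either case a conjunct of $\chi$ fails at every realisation of $p$ in a positive (existential) model. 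Since $tp^+_M(a)$ is consistent it has at least one completion $p$, and $M\fo^e\chi(a)$ would force $p\in[\chi]=\emptyset$, a contradiction. Hence every $tp^+_M(a)$ is maximal, and a routine verification of the resultant identity of Fact \ref{POSCAR} (valid already under $M\mo T_u$, using that complete types are realised in positive models) shows $M\mop T$.

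It remains to show $M$ is existentially universal in $\M(T_u)$. Given a partial positive type $\pi(x,y)$ and $b\in M_y$ such that $\pi(x,b)$ is realised in a continuation of $M$, put $\theta(y)=\exists x\,\bigwedge\pi(x,y)\in L_{\oo\omega}$ and $q=tp^+_M(b)$, which is now complete. Using Lemma \ref{CONTPOS} I would show that, for a complete type $q$, the type $\pi(x,b')$ is realisable in a continuation of any positive model realising $q$ at $b'$ if and only if $\pi(x,y)\cup q(y)$ is consistent with $T$; consequently $q\in[\theta(y)]$ iff $\pi\cup q$ is consistent, because the existential models used to compute $[\theta]$ realise $\pi(x,\cdot)$ at a realisation of $q$ exactly when $\pi\cup q$ is consistent, by their existential universality. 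The hypothesis that $\pi(x,b)$ is realised in a continuation of $M$ gives precisely the consistency of $\pi\cup q$, so $q\in[\theta]$; as $q$ is the \emph{unique} completion of $tp^+_M(b)$, this yields $M\fo^e\theta(b)$, and genericity gives $M\mo\theta(b)$, i.e. $M\mo\pi(a,b)$ for some $a\in M_x$. Thus $M\moe T$.

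The main obstacle is the positivity step. A naive attempt to read existential universality directly off the formula $\exists x\,\bigwedge\pi$ fails for a general model of $T_u$, because $tp^+_M(b)$ may admit several completions while the forcing relation quantifies over \emph{all} of them and the universality hypothesis supplies only \emph{one}; the $\exists/\forall$ mismatch is exactly what the infinitary formula $\chi$ resolves, by compelling $tp^+_M(a)$ to be maximal. This is where the essential, genuinely non-finitary use of infinitary logic enters (an infinite conjunction of negated resultant formulas, whose interpretation is empty), and it is the only delicate point; once $M$ is known to be positive, the remaining argument is bookkeeping with Lemma \ref{CONTPOS} and Fact \ref{POSCAR}.
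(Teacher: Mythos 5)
Your proof is correct, but it takes a genuinely different route from the paper in both directions. For the forward inclusion, the paper runs an induction on the complexity of $\phi$ (treating $\bigwedge$, $\neg$ via Lemma \ref{PECTE}, and $\exists$ via openness of the projection and existential universality), whereas you short-circuit the induction by observing that for a positive model the forcing condition collapses to $tp^+_M(a)\in[\phi(x)]$ and that $[\phi(x)]$ was \emph{defined} in section \ref{EXINF} precisely so that membership is equivalent to satisfaction at any realisation in an existential model; your version is shorter but leans entirely on the Karp-theorem corollary, while the paper's induction is more self-contained. For the converse, the paper never establishes that a generic $M$ is a positive model: it continues $M$ into an existential $N$ realising the missing type, uses persistence of forcing under continuations together with the already-proved genericity of $N$, and derives a contradiction in three lines. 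You instead first force $tp^+_M(a)$ to be maximal by applying genericity to the infinitary formula $\neg\phi_0\wedge\bigwedge\{\neg\psi:\psi\in Res_T(\phi_0)\}$, whose interpretation in $S_x(T)$ is empty, and then exploit the uniqueness of the completion to read existential universality off a consistency criterion for $[\exists x\,\bigwedge\pi]$. This is longer, and two of your steps are only sketched (the ``routine verification'' that maximality of all finitary types yields the resultant identity of Fact \ref{POSCAR} -- which does go through by compactness, extracting a finite conjunction of $tp^+_M(a)$ lying in $Res_T(\phi)$ -- and the equivalence ``$q\in[\theta]$ iff $\pi\cup q$ is consistent with $T$'', which Lemma \ref{CONTPOS} does deliver as you indicate), but both gaps are fillable. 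What your route buys is an explicit intermediate fact of independent interest: existential genericity alone already forces a model of $T_u$ to be positive. What the paper's route buys is economy, since positivity of $M$ is never needed once one has stability of forcing under continuations and genericity of the existential continuation.
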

\begin{proof}
In the direct sense, let $M\moe T$. We proceed by induction on the complexity of $\phi(x)\in L_{\oo\omega}$ to showing that for $a\in M_x$ we have $M\mo \phi(a)\iff M\fo\phi(a)$; we let $p=tp^+(a)$. It suffices to treat the inductive steps of (infinite) conjunctions, negations and existential quantifications.\\
- If $\phi$ is atomic, then satisfaction and forcing trivially coincide\\
- If $\phi=\bigwedge \Phi$, we have $M\mo \phi(a)\iff$ for every $\psi\in \Phi$, $M\mo\psi(a)\iff$ (by induction hypothesis) $\forall\psi\in \Phi,\ M\fo\psi(a)\iff\ p\in\bigcap\{[\psi(x)]:\psi\in \Phi\}=[\bigwedge\Phi(x)]=[\phi(x)]$, i.e. $M\fo\phi(a)$\\
- If $\phi=\neg\psi$, then $M\mo\phi(a)\iff M\not\mo\psi(a) \iff$ (by induction hypothesis) $M\not\fo\psi(a)\iff$ (by Lemma \ref{PECTE}, as $M\mop T$) $M\fo\phi(a)$\\
- If $\phi=(\exists y)\psi$, suppose $M\mo \phi(a)$ : there exists $b\in M_y$ such that $M\mo \phi(a,b)$, hence by induction hypothesis we have $M\fo\psi(a,b)$, i.e. $q(x,y)=tp^+(a,b)\in [\psi(x,y)]$; if $\pi:S_{xy}(T)\to S_x(T)$ is the canonical projection, we have $p=\pi(q)$ and $\pi([\psi(x,y)])=[\phi(x)]$ (CHECK !!!), whence $p\in [\phi(x)]$ and $M\fo\phi(a)$. Conversely, if $M\fo\phi(a)$ we have $p\in [\phi(x)]$, hence there exists $q\in [\psi(x,y)]$ such that $p=\pi(q)$; the positive type $q(a,y)$ is consistent with $T$ and $D^+M$ (CHECK) hence as $M$ is existential there is a realisation $b$ of $q(a,b)$ in $M_y$, which means $tp^+(a,b)\in [\psi(x,y)]$, i.e. $M\fo\psi(a,b)$. By induction hypothesis we have $M\mo\psi(a,b)$ and by definition we get $M\mo\phi(a)$. The proof that $M$ is existentially generic is complete.\\
As for the reciprocal, let $M\mo T_u$ be existentially generic, and suppose $M$ is not an existential model of $T_u$ : there exists a partial positive type $\pi(x,b)$ with finite parameters in $M_y$, such that $\pi(x,b)$ is consistent with $D^+M\cup T_u$ but is not realised in $M$. This means we may find a homorphism $f:M\to N\mo T_u$ such that $N\mo(\exists x)\bigwedge\pi(x,fb)$, and we may suppose $N\moe T$, whereas $M\not\mo (\exists x)\bigwedge\pi(x,b)$. As $M$ is generic, we have $M\fo \neg(\exists x)\bigwedge\pi(x,b)$, and as forcing is stable under continuations we get $N\fo\neg(\exists x)\bigwedge\pi(x,fb)$; now by the first part of the proof $N$ is generic, hence $N\mo\neg(\exists x)\bigwedge\pi(x,fb)$, which is a contradiction. By \emph{reductio ad absurdum}, $M$ is an existential model of $T_u$.
\end{proof}

\end{appendix}

\end{document}